\newtheorem{theorem}{Theorem}[section]
\newtheorem{lemma}[theorem]{Lemma}
\newtheorem{proposition}[theorem]{Proposition}
\newtheorem{corollary}[theorem]{Corollary}
\theoremstyle{definition}
\newtheorem{definition}[theorem]{Definition}
\newtheorem{remark}[theorem]{Remark}
\numberwithin{equation}{section}
\def\llll{\longrightarrow}
\newcommand{\N}{{\mathbb {N}}}
\newcommand{\R}{{\mathbb {R}}}
\newcommand{\1}{{\mathds{1}}}
\def\supp{{\rm supp \;}}
\def\com#1{{``#1''}}
\def\sep{{ \ \  }}
\def\sem{{\ \ \ \  }}
\def\seg{{\ \ \ \  \ \  }}
\title[Bishop-Phelps-Bollob{\'a}s  property] {Bishop-Phelps-Bollob{\'a}s  property  for positive \\ operators  when the domain is $L_\infty $}
\author[M.D. Acosta]{Mar\'{\i}a D. Acosta}
\address{Universidad de Granada, Facultad de Ciencias,
	Departamento de An\'{a}lisis Matem\'{a}tico, 18071 Granada, Spain}
\email{dacosta@ugr.es}
\author[M. Soleimani]{Maryam Soleimani-Mourchehkhorti}
\address{School of Mathematics, Institute for Research in Fundamental Sciences (IPM), P.O. Box: 19395-5746, Tehran, Iran}
\email{m-soleimani85@ipm.ir}
\thanks{The  first  author was  supported  by Junta de Andaluc\'{\i}a grant  FQM--185  and also by Spanish MINECO/FEDER grant PGC2018-093794-B-I00.  The second author was   supported by a grant from IPM}
\keywords{Banach space, operator, Bishop-Phelps-Bollob{\'a}s  theorem, Bishop-Phelps-Bollob{\'a}s  property.}
\begin{document}

\subjclass[2010]{Primary 46B04; Secondary 47B99.}

 {\large

\begin{abstract}
	We prove that the class of  positive operators from   $L_\infty (\mu)$ to  $Y$  has the Bishop-Phelps-Bollob{\'a}s property for any positive measure $\mu$, whenever $Y$ is a uniformly monotone Banach lattice with a weak  unit.  The same result also holds for the  pair $(c_0, Y)$ for any  uniformly monotone Banach lattice  $Y.$  Further we show that  these results are optimal in case that $Y$ is  strictly monotone.
\end{abstract}
\maketitle

   \section{Introduction}

 In 1961 Bishop and Phelps proved that every continuous  linear functional  on a Banach space can be approximated by norm attaining functionals \cite{BP}. Since then a lot of attention has been  devoted to extend  Bishop-Phelps theorem in the setting of   operators on Banach spaces (see \cite{Acs}). On the other hand Bollob{\'a}s  proved a \com{quantitative version} of that result in 1970 \cite{Bol}. Before stating   this result  we  introduce some notation. By $B_X$, $S_X$ and $X^*$ we denote the closed unit ball, the unit sphere and the topological dual of a Banach space $ X ,$  respectively. If $X$ and $Y$ are both real or  complex Banach spaces,
$ L(X, Y )$ denotes the space of (bounded linear) operators from $X$ to $Y,$ endowed with
its usual operator norm.

\vspace{3mm}

{\it Bishop-Phelps-Bollob{\'a}s theorem} (see \cite[Theorem 16.1]{BoDu} or \cite[Corollary 2.4]{CKMMR}). Let $X$ be a Banach space and $0< \varepsilon < 1$. Given $x \in B_X$  and $x^* \in S_{X^*}$ with $\vert 1- x^* (x) \vert < \frac{\varepsilon ^2 }{2}$, there are elements $y \in S_X$ and $y^* \in S_{X^*}$  such that $y^* (y)=1$, $\Vert y-x \Vert < \varepsilon$ and $\Vert y^* - x^* \Vert < \varepsilon $.

\vspace{2mm}

In 2008,   Acosta, Aron, Garc{\'i}a and Maestre  defined the Bishop-Phelps-Bollob{\'a}s property for operators between  Banach spaces \cite{AAGM}. 

\begin{definition}
	[{\cite[Definition 1.1]{AAGM}}]
	\label{def-BPBp}
	Let $X$  and $Y$ be either real or complex Banach spaces. The pair $(X,Y )$ is said to have the \textit{Bishop-Phelps-Bollob{\'a}s property for operators}  if for every $  0 < \varepsilon  < 1 $  there exists $ 0< \eta (\varepsilon) < \varepsilon $ such that for every $S\in S_{L(X,Y)}$, if $x_0 \in S_X$ satisfies
	$ \Vert S (x_0) \Vert > 1 - \eta (\varepsilon)$, then
	there exist an element $u_0 \in S_X$  and an operator $T \in S_{L(X,Y )}$ satisfying the following conditions
	$$
	\Vert T (u_0) \Vert =1, \sem \Vert u_0- x_0 \Vert < \varepsilon \seg \text{and}
	\sem \Vert T-S \Vert < \varepsilon.
	$$
\end{definition}

Since then a number of interesting results  related to this property have been  obtained  (see \cite{Acbc}).

\vspace{3mm}

Very recently in \cite{ASpos},  the authors introduced   the notion of  Bishop-Phelps-Bollob{\'a}s property for positive  operators between two Banach lattices. Let us mention that  the only difference between this property and the previous one is that in the new property  the operators appearing in Definition \ref{def-BPBp} are positive. In the same paper it is   proved that   the pairs  $(c_0 ,  L_1(\nu) )$ and $(L_\infty (\mu)  ,  L_1(\nu) )$  have the  Bishop-Phelps-Bollob{\'a}s property for positive operators   for any positive measures $\mu$ and  $\nu$ (see \cite[Theorems 3.1 and 2.5]{ASpos}).

\vspace{3mm}

In this paper  we show  a far reaching extension of those results.  More precisely we prove that the pair $(c_0, Y)$ has the  Bishop-Phelps-Bollob{\'a}s property for positive operators    whenever $Y$ is a uniformly monotone Banach lattice (see Corollary 3.3).    We also show that the pair   $(L_\infty (\mu) , Y )$  has the  Bishop-Phelps-Bollob{\'a}s property for positive operators   for any positive measure $\mu$  if $Y$ is a uniformly monotone Banach  lattice  with a weak unit (see Corollary 2.6).  Notice that the last assumption is very mild. For instance, separable Banach lattices have a weak unit (see \cite[Lemma 3, p. 367]{Bi}).

We also remark that not every Banach  function space $Y$ satisfies that the pair $(c_0,Y)$  has the Bishop-Phelps-Bollob{\'a}s  property for positive operators (see \cite[Example 3.2]{ASpos}).
 It is worth also to mention that it is not known whether or not the pair $(c_0, \ell_1)$ has the Bishop-Phelps-Bollob{\'a}s  property for operators in the real case. For some partial results  when the domain is $c_0$ see \cite{KiI},\cite{AGKM}, \cite{ADS} and \cite{AD}.  The paper \cite{KLL} contains a  positive  result  for  the pair $(L_\infty (\mu), Y)$, whenever $Y$ is a uniformly convex  Banach space.

 In the last section of the paper we show  under very mild assumptions that anytime that a pair of  Banach lattices $(X,Y) $ has the BPBp for positive operators,  then $Y$ is indeed uniformly monotone, whenever $X$  admits non trivial $M$-summands and $Y$ is strictly monotone  (see  Proposition \ref{prop-ppp}).
  As a consequence,   the geometrical assumption  on the range space $Y$ in  the main results of sections 2 and 3 are optimal  in case that $Y$ is strictly monotone (see Corollary \ref{cor-optimal}).

We remark that throughout this paper we consider only real Banach spaces.

\section{ Bishop-Phelps-Bollob{\'a}s property for positive operators from $L_\infty$ to a uniformly monotone Banach lattice}

We begin  by recalling some definitions   and  the appropriate notion of  Bishop-Phelps-Bollob{\'a}s property  for positive operators.
For the terminology and basic facts related to  Banach lattices see, for instance,  \cite{AbAl}, \cite{Bi} and \cite{LiTz}. 

An \textit{ordered vector space} is a real vector space $X$  equipped with  an order relation $\le$  that is compatible with the algebraic structure of $X$.   An ordered vector space is  a \textit{Riesz space} if every pair of vectors has a least upper bound  and a greatest lower bound.
A norm $\Vert \ \Vert $ on a  Riesz space $X$ is said to be \textit{ a lattice norm } whenever $|x| \leq |y|$ implies $\Vert x\Vert  \le \Vert y \Vert  $. \textit{A normed Riesz space} is a  Riesz space equipped  with a lattice norm.  
 A  \textit{Banach lattice} is a  normed Riesz space whose  norm is complete.
  A positive element  $e$ in a  Banach lattice  $X$ is a \textit{weak  unit}   if  $ x \wedge e =0$ for some $x \in X$ implies  that $x=0.$
 
A Banach lattice  $E$ is {\it uniformly monotone}  if for every
$\varepsilon > 0$ there is $\delta (\varepsilon)> 0$ such that whenever $x \in S_E$, $ y \in E$ and $x,y \ge 0$
the condition $\Vert x +y \Vert \le 1 + \delta (\varepsilon) $ implies that $\Vert y \Vert \le \varepsilon$.  A  Banach   lattice  $X$  is said to be \textit{order continuous}  whenever  $(x_\alpha ) \downarrow 0 $ in $X$ implies $( \Vert x_\alpha \Vert ) \to 0$.

In case that  $(\Omega,   \mu)$ is a 
measure space,  we denote by $L^0(\mu)$  the space of (equivalence classes of
$\mu$-a.e. equal) real valued measurable functions on $\Omega$. We say that a Banach space $X$ is a \textit{Banach function space}
on $(\Omega,  \mu)$ if  $X$ is an ideal in $L^0 (\mu)$ and whenever $x,y\in
X$ and $|x| \le |y|$ a.e., then $\Vert x \Vert \le \Vert y \Vert .$

An operator $T: X \to Y$ between two ordered vector spaces is called \textit{positive} if $x \geq 0$ implies $Tx \geq 0$.

Let us notice that in case that $Y$ is a uniformly monotone  Banach lattice   it follows from the definition that the function $\delta $ satisfies $\delta (t) \le t$ for every positive real $t$.

\begin{definition} [{\cite[Definition 2.2]{ASpos}}]
	\label{def-BPBp-pos}
	Let $X$  and $Y$ be    Banach lattices. The pair $(X,Y )$ is said to have the {\it Bishop-Phelps-Bollob{\'a}s property for  positive operators}   if for every $  0 < \varepsilon  < 1 $  there exists $ 0< \eta (\varepsilon) < \varepsilon $ such that for every $S\in S_{L(X,Y)}$, such that $S \ge 0$,  if $x_0 \in S_X$ satisfies
	$ \Vert S (x_0) \Vert > 1 - \eta (\varepsilon)$, then
	there exist an element $u_0 \in S_X$  and a positive  operator $T \in S_{L(X,Y )}$ satisfying the following conditions
	$$
	\Vert T (u_0) \Vert =1, \sem \Vert u_0- x_0 \Vert < \varepsilon \seg \text{and}
	\sem \Vert T-S \Vert < \varepsilon.
	$$
\end{definition}

\newpage
\begin{remark}
	\label{u0}
	In case that the pair $(X,Y)$ satisfies the previous definition, if the element  $x_0$  is positive,  then the element  $u_0$  can also be chosen  positive.
\end{remark}

\begin{proof}
	Assume that $(X,Y)$ has the BPBp for positive operators and assume that  $S \in S_{L(X,Y)}$ and $x_0 \in S_X$  satisfy the assumptions in  Definition \ref{def-BPBp-pos} and $x_0$ is also positive. So there exists a pair $(T, u_0) \in S_{L(X,Y) } \times S_X$ such that $T$ is positive  and satisfying
	$$
	\Vert T(u_0) \Vert =1, \sem    \Vert  u_0- x_0 \Vert < \varepsilon    \sem \text {and} \sem \Vert T- S \Vert < \varepsilon .
	$$
	We will check that the positive element $\vert u_0 \vert $ also satisfies the desired conditions.
	
	Notice that from triangle inequality we have $\|  |u_0|  - |x_0| \| \leq \|  u_0 - x_0 \| < \varepsilon $, so  since $x_0$ is positive we conclude  $\|  |u_0|  - x_0 \| \leq \|  u_0 - x_0 \| < \varepsilon $. On the other hand since  the operator $T$ is positive,  $| T(u_0)| \leq   T(|u_0|) .$ 
	Hence  $1=\| T(u_0)\| \leq  \| T(|u_0|)\| \leq \|T\| = 1$, so  $\| T(|u_0|)\| = 1.$  Therefore  the element  $|u_0| \in S_X $ satisfies the desired conditions.
\end{proof}

\vskip4mm

Next we show  some technical results that will be     useful later. Throughout the rest of the section, if $(\Omega, \mu) $   is a measure space, we denote  by $\1$  the constant function equal to $1$ on $\Omega$. Since an element $f$  in  $B_{L_\infty (\mu)}$ satisfies that $ \vert f \vert \le  \1$ a.e.,  it is clear that a positive operator from $L_\infty(\mu)$ to any other Banach lattice satisfies the next assertion.

\begin{lemma}
	\label{le-pos}
Let $\mu$  be a  positive measure and  $T$ a positive operator from 	$ L_\infty (\mu)$ to   some Banach lattice $Y.$  Then $\Vert T \Vert = \Vert T(\1)\Vert$.
\end{lemma}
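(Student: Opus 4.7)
The plan is very short because the lemma is essentially a direct consequence of three standard facts about positive operators on Banach lattices: the lattice norm property, the inequality $|Tf|\le T|f|$ valid for any positive operator $T$, and the observation flagged in the sentence just preceding the lemma, namely that $|f|\le \1$ a.e.\ for every $f\in B_{L_\infty(\mu)}$.

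First I would dispose of the easy direction: since $\1\in S_{L_\infty(\mu)}$, we immediately have $\|T(\1)\|\le \|T\|$.

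For the reverse inequality, I would fix $f\in B_{L_\infty(\mu)}$ and argue in three steps. Step one: $|f|\le \1$ pointwise a.e., hence as elements of the Banach lattice $L_\infty(\mu)$. Step two: because $T$ is positive, it is monotone on the positive cone, so $T(|f|)\le T(\1)$; moreover the general inequality $|T(f)|\le T(|f|)$ holds for any positive operator (it follows from $\pm f\le |f|$ applied to $T$). Combining, $|T(f)|\le T(\1)$ in $Y$. Step three: since $Y$ carries a lattice norm, $\|T(f)\|=\bigl\||T(f)|\bigr\|\le \|T(\1)\|$. Taking the supremum over $f\in B_{L_\infty(\mu)}$ gives $\|T\|\le \|T(\1)\|$, and equality follows.

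There is essentially no obstacle here; the only thing worth being careful about is citing (or recalling in one line) the fact that $|Tf|\le T|f|$ holds for every positive operator between Riesz spaces, which is a textbook property (cf.\ \cite{AbAl}). Note also that positivity of $T(\1)$ gives $T(\1)=|T(\1)|$, so no absolute value needs to appear on the right-hand side.
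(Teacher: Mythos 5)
Your proof is correct and follows exactly the route the paper intends: the paper gives no separate proof, merely noting before the lemma that $\vert f\vert \le \1$ a.e.\ for $f\in B_{L_\infty(\mu)}$ and declaring the statement clear from positivity, which is precisely the chain $\vert T(f)\vert \le T(\vert f\vert)\le T(\1)$ combined with the lattice norm that you spell out.
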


The next result extends \cite[Lemma  2.4] {ASpos}, where the analogous result was proved for $L_1(\mu).$

\begin{lemma}
	\label{le-dis-supp}
	Let  $Y$ be  a uniformly monotone Banach  function space and $0 < \varepsilon < 1$. Assume that $f_1 $ and $f_2$ are positive elements in $Y$ such that
	$$
	\Vert f_1 + f_2 \Vert \le 1 \sem \text{\rm and} \sem  \frac{1}{ 1 + \delta ( \frac{\varepsilon}{3})}   \le 	\Vert f_1 - f_2 \Vert  ,
	$$
where $\delta $ is the function satisfying  the definition of uniform monotonicity for $Y.$ 	Then there are two  positive functions   $h_1$ and $h_2$  in $Y$   with disjoint supports  satisfying that
	$$
	\Vert h_1 + h_2 \Vert  =1 	 \sem \text{\rm and} \sem \Vert h_i- f_i \Vert  <  \varepsilon \sep \text{for}\sep i=1,2  .
	$$
\end{lemma}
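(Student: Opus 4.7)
The plan is to produce $h_1$ and $h_2$ from $f_1$ and $f_2$ by first removing the ``common mass'' $f_1\wedge f_2$ (which will force the supports to be disjoint) and then rescaling slightly so the sum has norm exactly $1$. The heart of the matter is to control $\|f_1\wedge f_2\|$ via uniform monotonicity; everything else is elementary.

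Concretely, I would set
$$g_i := (f_i - f_{3-i})^+ = f_i - f_1\wedge f_2 \qquad (i=1,2).$$
Then $g_1$ and $g_2$ are positive with disjoint supports, and $g_1+g_2 = |f_1-f_2|$. Writing
$$f_1+f_2 \;=\; |f_1-f_2| \;+\; 2(f_1\wedge f_2),$$
both summands are positive. Normalizing by $\alpha:=\|f_1-f_2\|$ and using the hypothesis $\alpha\ge 1/(1+\delta(\varepsilon/3))$, I get
$$\left\|\frac{|f_1-f_2|}{\alpha} + \frac{2(f_1\wedge f_2)}{\alpha}\right\| \;=\; \frac{\|f_1+f_2\|}{\alpha} \;\le\; \frac{1}{\alpha} \;\le\; 1+\delta(\varepsilon/3).$$
Since $|f_1-f_2|/\alpha$ is a positive element of the unit sphere and $2(f_1\wedge f_2)/\alpha$ is positive, uniform monotonicity gives $\|2(f_1\wedge f_2)/\alpha\|\le \varepsilon/3$, and because $\alpha\le 1$ this yields $\|f_1\wedge f_2\|\le \varepsilon/6$.

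Next I would define $h_i := g_i/\alpha$. By construction $h_1,h_2\ge 0$ have disjoint supports and $\|h_1+h_2\|=\|g_1+g_2\|/\alpha = 1$. It only remains to bound $\|h_i-f_i\|$. Writing
$$h_i - f_i \;=\; \frac{1-\alpha}{\alpha}\,g_i \;-\; (f_i - g_i) \;=\; \frac{1-\alpha}{\alpha}\,g_i \;-\; f_1\wedge f_2,$$
and noting $\|g_i\|\le \|g_1+g_2\|=\alpha\le 1$ together with $(1-\alpha)/\alpha \le \delta(\varepsilon/3)$, I obtain
$$\|h_i - f_i\| \;\le\; \frac{1-\alpha}{\alpha}\,\|g_i\| + \|f_1\wedge f_2\| \;\le\; \delta(\varepsilon/3) + \frac{\varepsilon}{6} \;\le\; \frac{\varepsilon}{3} + \frac{\varepsilon}{6} \;=\; \frac{\varepsilon}{2} \;<\; \varepsilon,$$
where in the last line I used the observation (made just before the statement of the lemma) that $\delta(t)\le t$.

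I do not anticipate a serious obstacle: the only nonroutine step is recognizing that the decomposition $f_1+f_2 = |f_1-f_2| + 2(f_1\wedge f_2)$, once rescaled by $1/\|f_1-f_2\|$, is exactly the configuration to which uniform monotonicity with threshold $\varepsilon/3$ can be applied. Everything else amounts to writing down the right lattice identities and a one-line triangle-inequality estimate.
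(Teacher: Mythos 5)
Your proof is correct, and it follows the same strategy as the paper's: identify the ``overlap'' of $f_1$ and $f_2$, control its norm by applying uniform monotonicity to the identity $f_1+f_2=|f_1-f_2|+2(f_1\wedge f_2)$ (the paper writes the overlap as $h=2(f_1\chi_{C_2}+f_2\chi_{C_1})$, which is the same function $2(f_1\wedge f_2)$), discard it to get disjointly supported approximants, and renormalize. The one genuine difference is the choice of approximants: the paper takes $g_i=f_i\chi_{C_i}$ (restriction of $f_i$ to the set where it dominates), whereas you take $g_i=(f_i-f_{3-i})^+$. Your choice has the pleasant consequence that $g_1+g_2=|f_1-f_2|$ exactly, so the normalizing constant is precisely $\alpha=\Vert f_1-f_2\Vert$ and is bounded below by hypothesis; the paper instead needs a separate triangle-inequality argument (its inequality \eqref{g1-g2}) to bound $\Vert g_1+g_2\Vert$ away from zero before dividing, and its final estimate comes out as $2\varepsilon/3+\varepsilon/6$ rather than your tighter $\varepsilon/2$. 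All the steps you use are valid: $\Vert g_i\Vert\le\Vert g_1+g_2\Vert$ follows from $0\le g_i\le g_1+g_2$ and the lattice norm, $(1-\alpha)/\alpha\le\delta(\varepsilon/3)$ follows from the hypothesis on $\alpha$, and the bound $\delta(t)\le t$ is recorded in the paper just before Definition \ref{def-BPBp-pos}. So your argument is a slightly cleaner variant of the same proof.
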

\begin{proof} Assume that $Y$ is a Banach function space  on the   measure space $(\Omega, \mu).$  	We consider the partition of $\Omega$ given by the following sets
	$$
C_1= \{ t \in \Omega:   f_2 (t) \le f_1 (t) \} \sem \text{and} \sem C_2= \{ t \in \Omega:   f_1 (t)  <  f_2 (t) \}  .
	$$
Clearly $C_1$ and $C_2$ are measurable sets. The function $h$ given by 	 $h= 2 \bigl( f_1 \chi_{C_2 } +  f_2 \chi_{C_1 } )$  belongs to $Y$ since $Y$ is a Banach  function space  on $(\Omega, \mu)$.  It is also clear that $h$ is positive and it is satisfied that 
\begin{equation}
\label{f1-f2}
\vert f_1 - f_2 \vert + h = f_1 + f_2.
\end{equation}
 
Clearly we have that 
 \begin{align*}
 \Vert f_1 + f_2 \Vert &  \le 1 \le \Bigl( 1 + \delta \Bigl( \frac{\varepsilon}{3} \Bigr )\Bigr) \Vert f_1 - f_2 \Vert     \\
 & =   \Bigl( 1 + \delta \Bigl( \frac{\varepsilon}{3} \Bigr )\Bigr) \Vert \; \vert  f_1 - f_2 \vert\; \Vert  .   
 \end{align*}
So from \eqref{f1-f2}   it follows that 
$$
\Vert h \Vert   \le \frac{\varepsilon}{3}  \bigl\Vert  \vert f_1 -f_2 \vert \bigr\Vert =  \frac{\varepsilon}{3} \Vert  f_1 -f_2 \Vert \le \frac{\varepsilon}{3} \Vert f_1 +f_2 \Vert \le \frac{\varepsilon}{3}.
$$
As a consequence
\begin{equation}
\label{f2-C2-f1-C1-small}
\Vert f_1 \chi_{C_2} \Vert \le  \frac{\varepsilon}{6} \sem \text{and} \sem \Vert f_2 \chi_{C_1} \Vert \le \frac{\varepsilon}{6}.
\end{equation}
 Now define  $g_i = f_i \chi_{ C_i}$ for $i=1,2$. Notice that  $g_1 $ and $g_2$  are positive functions with disjoint supports, belong to $Y$ and also satisfy 

 \begin{equation}
 \label{g1+g2-less-1}
 \Vert g_1 + g_2 \Vert \leq 1.
 \end{equation}
By \eqref{f2-C2-f1-C1-small} the function $g_1$  satisfies 
\begin{equation}
\label{g1-f1-small}
\Vert g_1 - f_1  \Vert =  \Vert f_1 \chi_{C_1}  - f_1 \Vert  =   \Vert f_1 \chi_{C_2}  \Vert   \le  \frac{\varepsilon}{6}.
\end{equation}

By using the same argument we also obtain that  
\begin{equation}
\label{g2-f2-small}
\Vert g_2 - f_2  \Vert   \le \frac{\varepsilon}{6}.
\end{equation}
 
 It is also satisfied that
 \begin{align}
 \label{g1-g2}
 \Vert g_1 + g_2 \Vert &  \ge  \Vert f_1 + f_2 \Vert - \Vert f_1 - g_1 \Vert -  \Vert f_2 - g_2 \Vert  \nonumber \\
 &\ge     \Vert f_1 - f_2 \Vert -  \frac{\varepsilon}{3}   \sem \text{(by \eqref{g1-f1-small} and \eqref{g2-f2-small})}  \nonumber \\
 &\ge \frac{1}{ 1 + \delta ( \frac{ \varepsilon}{3} ) } - \frac{\varepsilon}{3}
 	 \\
 &\ge  \frac{1}{ 1 +  \frac{ \varepsilon}{3}  } - \frac{\varepsilon}{3} > 0 \nonumber.
 \end{align}
For   $i=1,2$ we can  define the function $h_i$ by $h_i= \frac{g_i}{ \Vert g_1 + g_2 \Vert }.$  Clearly $h_1$ and $h_2$ are positive functions in $Y$ with disjoint supports satisfying also that
$\Vert h_1 + h_2 \Vert =1$.

  For $i=1,2$  we also have that

\begin{align}
\label{hi-gi}
\Vert  h_i - g_i \Vert & =  \Bigl \Vert  \frac{ g_i}{ \Vert g_1 + g_2 \Vert } - g_i \Bigr \Vert 
 \nonumber
   \\
& = \frac{ \Vert g_i \Vert }{\Vert g_1 + g_2 \Vert } \bigl \vert 1 - \bigl \Vert g_1 + g_2 \bigr \Vert \; \bigr \vert     
 \nonumber
\\
&\le 1 -   \bigl \Vert g_1 + g_2 \bigr \Vert  \sem \text{(by  \eqref{g1+g2-less-1})}
  \\
&\le  1-  \frac{1}{ 1 + \delta (\frac{\varepsilon}{3})} + \frac{\varepsilon}{3} \sem \text{(by  \eqref{g1-g2})}
 \nonumber
 \\
 & < \frac{2 \varepsilon}{3}.
 \nonumber
\end{align}

 By  using  \eqref{hi-gi}, \eqref{g1-f1-small} and  \eqref{g2-f2-small} we can  estimate the   distance between $h_i$ and $f_i$ for $i=1,2$ as follows
 $$
 \Vert h_i -  f_i \Vert \le  \Vert h_i -  g_i \Vert + \Vert g_i -  f_i \Vert < \frac{2 \varepsilon}{3} +  \frac{ \varepsilon}{6}  < \varepsilon.
 $$
 This  finishes  the proof.
\end{proof}

\begin{theorem}
	\label{teo-BPBp-L-infty-UM-lattice}
	The pair $(L_\infty (\mu), Y) $ has the Bishop-Phelps--Bollob{\'a}s property for positive operators, for any positive measure $\mu$, whenever  $Y$ is a uniformly monotone Banach function space. The function $\eta$  satisfying Definition \ref{def-BPBp-pos} depends only on the modulus of uniform monotonicity of $Y$.

\end{theorem}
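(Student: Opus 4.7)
My plan is to follow the pattern of the proof of \cite[Theorem 2.5]{ASpos} for the range $L_1$, using the new Lemma \ref{le-dis-supp} in place of the automatic disjoint-support additivity of the $L_1$ norm. Given a positive $S\in S_{L(L_\infty(\mu),Y)}$ (so that $\Vert S\Vert=\Vert S(\mathds{1})\Vert=1$ by Lemma \ref{le-pos}) and $f_0\in S_{L_\infty(\mu)}$ with $\Vert S(f_0)\Vert>1-\eta(\varepsilon)$, I decompose $f_0=f_0^+-f_0^-$ into its disjointly-supported positive and negative parts on $A_\pm:=\{\pm f_0>0\}$, set $f_1:=S(f_0^+)$ and $f_2:=S(f_0^-)$, and observe that $f_1,f_2\ge 0$ in $Y$ satisfy
$$
\Vert f_1+f_2\Vert=\Vert S(|f_0|)\Vert\le\Vert S(\mathds{1})\Vert=1,\qquad \Vert f_1-f_2\Vert=\Vert S(f_0)\Vert>1-\eta(\varepsilon),
$$
using $|f_0|\le\mathds{1}$ and positivity of $S$.

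Next I would choose $\eta(\varepsilon)$ (depending only on $\varepsilon$ and on the modulus of uniform monotonicity $\delta$ of $Y$) so that $1-\eta(\varepsilon)\ge 1/(1+\delta(\varepsilon/c))$ for a constant $c$ to be fixed at the end. This is precisely the hypothesis of Lemma \ref{le-dis-supp} applied to $(f_1,f_2)$, which therefore produces positive, pairwise disjointly-supported $h_1,h_2\in Y$ with $\Vert h_1+h_2\Vert=1$ and $\Vert h_i-f_i\Vert<\varepsilon/c$ for $i=1,2$. Crucially, since $h_1,h_2$ have disjoint supports in the Banach function space $Y$, one automatically has $|h_1-h_2|=h_1+h_2$, hence $\Vert h_1-h_2\Vert=\Vert h_1+h_2\Vert=1$; this is the role played by uniform monotonicity, and it gives a ready-made norm-one target in $Y$ on which to land.

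I then construct $T$ and $u_0$ by a small rank-two modification of $S$. Using Hahn-Banach, pick positive norm-one functionals $\phi_\pm\in L_\infty(\mu)^*$ concentrated on small subsets of $A_\pm$ on which $|f_0|$ is within $\varepsilon$ of $1$, so that $\phi_+(f_0^+)$ and $\phi_-(f_0^-)$ are essentially $1$; form the rank-two positive operator $R(g):=\phi_+(g)h_1+\phi_-(g)h_2$, which satisfies $R(\mathds{1})=h_1+h_2$, so $\Vert R\Vert=\Vert R(\mathds{1})\Vert=1$ by Lemma \ref{le-pos}. Take $T$ to be a suitably renormalized convex combination $(1-\lambda)S+\lambda R$ with $\lambda$ of order $\varepsilon$; then $T\ge 0$, $\Vert T-S\Vert=O(\varepsilon)$, and after the renormalization $\Vert T\Vert=1$. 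The companion vector $u_0$ is $f_0$ replaced by $\pm 1$ on the (small) supports of $\phi_\pm$; this keeps $\Vert u_0-f_0\Vert_\infty<\varepsilon$ and forces $\phi_+(u_0)=1$, $\phi_-(u_0)=-1$, so that $R(u_0)=h_1-h_2$ exactly, while $S(u_0)\approx h_1-h_2$ (from $\Vert h_i-f_i\Vert<\varepsilon/c$), which pushes $\Vert T(u_0)\Vert$ within $O(\varepsilon)$ of $1$.

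The principal obstacle I anticipate is upgrading this approximate attainment $\Vert T(u_0)\Vert\approx 1$ to the exact equality $\Vert T(u_0)\Vert=1$ required by Definition \ref{def-BPBp-pos}. Because Lemma \ref{le-pos} forces $\Vert T\Vert=\Vert T(\mathds{1})\Vert$ for every positive $T$ on $L_\infty(\mu)$, exact attainment at $u_0$ is equivalent to the lattice-disjointness of $T(u_0^+)$ and $T(u_0^-)$ in $Y$ combined with $T(\mathds{1}-|u_0|)$ being absorbed by those images. The disjoint-support structure of $h_1,h_2$ from Lemma \ref{le-dis-supp} is tailored to enforce the first condition on the rank-two part, the concentration of $\phi_\pm$ and the choice of $u_0$ control the second, and the constant $c$ is fixed at the end so that the accumulated $\varepsilon$-scale errors (from $\Vert h_i-f_i\Vert$, from $\lambda$, from the renormalization of $T$, and from $\Vert u_0-f_0\Vert_\infty$) add up to less than $\varepsilon$. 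The resulting $\eta$ depends, as announced, only on $\varepsilon$ and on $\delta$.
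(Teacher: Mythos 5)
Your setup is sound and your first step (feeding $S(f_0^+)$, $S(f_0^-)$ --- or, as the paper does, $S(\chi_A)$, $S(\chi_B)$ for the sets $A,B$ where $f_0$ is within $\eta$ of $\mp 1$ --- into Lemma \ref{le-dis-supp}) is essentially the intended use of that lemma. But the construction of $T$ has a genuine gap, and it is exactly the one you flag as ``the principal obstacle'': a renormalized convex combination $(1-\lambda)S+\lambda R$ with $\lambda$ of order $\varepsilon$ cannot yield \emph{exact} norm attainment at $u_0$. By Lemma \ref{le-pos}, $\Vert T\Vert=\Vert T(\1)\Vert$, so attainment at $u_0$ forces $\Vert T(u_0)\Vert=\Vert T(\1)\Vert$; but $T(\1)-|T(u_0)|\ge (1-\lambda)\bigl(S(\1)-|S(u_0)|\bigr)\ge 0$, and since $S(u_0^+)$ and $S(u_0^-)$ need not be disjoint in $Y$ (only their \emph{approximants} $h_1,h_2$ are), one generically has $|S(u_0)|\lneq S(\1)$ and hence $\Vert S(u_0)\Vert<\Vert S(\1)\Vert$ whenever the norm is strictly monotone. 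The dominant term $(1-\lambda)S$ thus destroys exactness, and no choice of the small parameters repairs it. The paper resolves this by a structural \emph{replacement} rather than a perturbation: it sets $U(f)=S(f\chi_A)\chi_{\supp h_1}+S(f\chi_B)\chi_{\supp h_2}$, discarding the contribution of $C=\{|f_0|\le 1-\eta\}$ (shown to be small via uniform monotonicity, $\Vert S(\chi_C)\Vert\le\eta$) and projecting the images onto the disjoint supports of $h_1,h_2$. Then for $u_0=\chi_B-\chi_A+f_0\chi_C$ one gets the \emph{pointwise} identity $|U(u_0)|=|U(\1)|$, so $\Vert U(u_0)\Vert=\Vert U\Vert$ exactly, and $T=U/\Vert U\Vert$ works. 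Your rank-two $R$ plays no role in that mechanism.

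Two smaller points. First, your functionals $\phi_\pm$ ``concentrated on small subsets of $A_\pm$ on which $|f_0|$ is within $\varepsilon$ of $1$'' need not exist for both signs: $\Vert f_0\Vert_\infty=1$ only guarantees that \emph{one} of the sets $\{f_0>1-\varepsilon\}$, $\{f_0<-1+\varepsilon\}$ is non-null (e.g.\ $f_0\ge 0$), so the construction must degrade gracefully in that case, as the paper's does (one of $S(\chi_A),S(\chi_B)$ and hence one of $h_1,h_2$ may simply vanish). Second, the identity $\Vert h_1-h_2\Vert=\Vert h_1+h_2\Vert$ for disjointly supported positive elements is a property of any lattice norm, not ``the role played by uniform monotonicity''; uniform monotonicity is what makes Lemma \ref{le-dis-supp} (the passage from nearly disjoint to exactly disjoint) and the estimate $\Vert S(\chi_C)\Vert\le\eta$ work.
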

%
\begin{proof}
Assume that $(\Omega _1, \mu)$ is a measure space and $Y$ is a Banach function  space  on $(\Omega _2, \nu).$
Let $ 0 < \varepsilon < 1$	 and $\delta $ be the function satisfying  the definition of uniform monotonicity for the Banach function  space $Y.$
Choose a  real number  $\eta$ such that $0 < \eta = \eta (\varepsilon) < \frac{\varepsilon}{18}$  and  satisfying also
\begin{equation}
\label{eta-small}
\frac{1}{ 1 + \delta (\frac{\varepsilon}{ 18})} < \frac{1}{ 1 + \delta (\eta ^2)}  - 3 \eta.
\end{equation}

	Assume that $f_0 \in S_{L_\infty (\mu)} , S\in  S_{ L( L_\infty (\mu), Y)}	$ and  $S$ is a positive operator such  that 
	$$
	\Vert S(f_0) \Vert > \frac{1}{ 1 + \delta (\eta ^2)} .
	$$
	 We can assume without loss of generality that $\vert f_0  \vert \le 1.$
	We define the sets $A,B$ and $C$ given by
	$$
	A=\{ t \in \Omega_1 : -1 \le f_0(t) < -1 + \eta \}, \sep B=\{ t \in \Omega_1 :  1- \eta <  f_0(t) \le 1 \} \sep 
	$$
	and
	$$
	\sep C=\{ t \in \Omega_1 : \vert f_0(t) \vert \le 1 - \eta \}. 
	$$
Clearly $\{A,B,C\}$ is a partition  of $\Omega _1$ into measurable sets. We clearly have that $\vert f_0\vert + \eta \chi_C \in S_{ L_\infty (\mu)}$.    By using that $S$ is a positive operator we have that
\begin{align*}
\Vert S ( \vert f_0 \vert + \eta \chi_C) \Vert &  \le  1     \\
&<  \Vert S (f_0) \Vert  \bigl( 1 + \delta (\eta ^2) \bigr)  \\
&\le   \Vert S ( \vert f_0 \vert ) \Vert  \bigl( 1 + \delta (\eta ^2) \bigr) .
\end{align*}
In view of the uniform monotonicity of $Y$ the previous inequality implies  that
$
\Vert  S (\eta \chi _C) \Vert \le \eta ^2$ and so
\begin{equation}
\label{S-C-small}
\Vert  S ( \chi _C) \Vert \le \eta.
\end{equation}
From the definition of the sets $A$ and $B$ we  have that $\Vert \chi_A  +  f_0 \chi_A  \Vert _\infty \le \eta $ and   $\Vert f_0 \chi_B - \chi_B \Vert _\infty \le \eta $ and  so
\begin{equation}
\label{S-f0A-close-SA}
\Vert   S(\chi_A) +S (  f_0 \chi_A ) \Vert \le \eta \sem \text{and } \sem \Vert  S (  f_0 \chi_B) - S(\chi_B) \Vert \le \ \eta.
\end{equation}
We clearly obtain that
\begin{align}
\label{SA-SB}
\Vert S(\chi_B) - S(\chi _A ) \Vert &  \ge  \Vert S( f_0\chi_B) + S( f_0\chi _A ) \Vert - \Vert S( f_0\chi_B) -S(\chi _B ) \Vert - \Vert   S(\chi _A )  + S(f_0\chi_A) \Vert
\nonumber \\
 &  \ge  \Vert S( f_0) \Vert -  \Vert S( f_0\chi_C)  \Vert -   2 \eta   \sem \text{(by \eqref{S-f0A-close-SA})}
 \nonumber \\
&>   \frac{1}{ 1+ \delta (\eta ^2)}  - \Vert S(\chi_C)  \Vert - 2 \eta
\\
&\ge    \frac{1}{ 1+ \delta (\eta ^2)}  - 3 \eta \sem \text{(by \eqref{S-C-small})}
\nonumber\\
& >   \frac{1}{ 1+ \delta ( \frac{\varepsilon}{18})}.
\nonumber
\end{align}

Since $S $ is a positive operator and   $\Vert S ( \chi_A) +  S(\chi _B) \Vert \le 1$, in view of \eqref{SA-SB} we can apply Lemma \ref{le-dis-supp}. Hence there are  two positive functions  $h_1$ and $h_2$ in $Y$  satisfying the following conditions
\begin{equation}
\label{h1-SA-h2-SB}
	\Vert h_1- S(\chi_A) \Vert  <   \frac{\varepsilon}{6},  \sem   \Vert h_2- S(\chi_B) \Vert   <    \frac{\varepsilon}{6} ,
\end{equation}

\begin{equation}
\label{h-dis-sup-norm-one}
	\supp h_1 \cap  \supp h_2 = \varnothing  \sem \text{and} \sem \Vert h_1+h_2 \Vert  =1.
	\end{equation}
	
	Hence
	\begin{align}
		\label{S-A-out-sup-h1}	
	\Vert  S (\chi_A) \chi _{ \Omega _2 \backslash \supp h_1} \Vert  &  =  \Vert \bigl  (h_1 - S(\chi_A) \bigr)   \chi _{ \Omega _2 \backslash \supp h_1} \Vert 	  
	\nonumber  \\
	&\le     \Vert h_1 -  S (\chi_A)  \Vert  \\
	& <  \frac{ \varepsilon}{6} \sem \text{\rm (by \eqref{h1-SA-h2-SB})}
	\nonumber
	\end{align}
	and 
		\begin{align}
		\label{S-B-out-sup-h2}	
		\Vert  S (\chi_B) \chi _{ \Omega _2 \backslash \supp h_2} \Vert  &  =  \Vert   \bigl( h_2 - S(\chi_B) \bigr)   \chi _{ \Omega _2 \backslash \supp h_2} \Vert 	  
		\nonumber  \\
		&\le     \Vert h_2 -  S (\chi_B)  \Vert  \\
		& <  \frac{ \varepsilon}{6} \sem \text{\rm (by \eqref{h1-SA-h2-SB})}.
		\nonumber
		\end{align}

	Now we define the operator $U: L_\infty (\mu) \llll Y$ as follows
	$$
U(f)= S(f \chi_A) \chi _{\supp h_1} +  S(f \chi_B) \chi _{\supp h_2} \seg (f \in L_\infty (\mu)).
	$$
	Since $Y$ is a Banach  function space  and $S \in  L(L_\infty (\mu),Y),$ $U$  is well defined  and belongs to $ L(L_\infty (\mu),Y).$ The operator  $U$ is positive since $S$ is positive.  
	It also satisfies that 
	\begin{align}
	\label{U-S}
	\Vert U -  S  \Vert  & =  \sup \bigl \{ \Vert S(f \chi _A) \chi _{ \Omega _2 \backslash \supp h_1} +  S(f \chi_B) \chi _{ \Omega _2 \backslash \supp h_2}    + S(f \chi_C) \Vert  : f \in B_{ L_\infty (\mu)} \bigr\} 
	\nonumber\\
	&  \le   \sup \bigl \{   \Vert  S(f \chi_A) \chi _{ \Omega _2 \backslash \supp h_1} \Vert  +  \Vert  S(f \chi_B) \chi _{\Omega _2 \backslash  \supp h_2}   ) \Vert  +  \Vert  S( f\chi_C) \Vert :  f \in B_{ L_\infty (\mu)} \bigr\}  \nonumber\\
	&  \le  \Vert  S(\chi_A) \chi _{ \Omega _2 \backslash \supp h_1} \Vert  +  \Vert  S( \chi_B) \chi _{\Omega _2 \backslash  \supp h_2}   ) \Vert  +  \Vert  S( \chi_C)  \Vert   \\
	& < \frac{\varepsilon}{3} +   \eta  < \frac{\varepsilon}{2} \sem \text{(by   \eqref{S-A-out-sup-h1},  \eqref{S-B-out-sup-h2} and \eqref{S-C-small})}.  
		\nonumber
		\end{align}
Hence
\begin{equation}
\label{U-1}
\vert \Vert U \Vert - 1 \vert < \dfrac{\varepsilon}{2},	
\end{equation}
so $U \ne 0$.	 

Finally we define $T = \frac{U}{ \Vert U \Vert }$.   Since $U$ is a positive operator, $T$ is also positive. Of course $T \in S_{ L(L_\infty (\mu), Y)}$  and also satisfies
\begin{align}
\label{T-S}
\Vert T - S \Vert & \le  \Vert T - U \Vert + \Vert U - S \Vert  
\nonumber   \\
& < \Bigl \Vert  \frac{U} { \Vert U \Vert } - U \Bigr \Vert + \frac{\varepsilon}{2} \sem \text{(by \eqref{U-S})} \\
& = \bigl \vert 1 - \Vert U \Vert \bigr \vert + \frac{\varepsilon}{2} 
\nonumber \\
& < \varepsilon \sem \text{(by \eqref{U-1})}.
\nonumber 
\end{align}

The  function $f_1$ given by $ f_1 = \chi_B - \chi_A + f_0 \chi _C$ belongs  to $S_{ L_\infty (\mu)}$ and satisfies that
\begin{equation}
\label{f1-f0}
\Vert f_1 - f_0 \Vert _\infty  =  \Vert  \chi_B - \chi_A + f_0 \chi _C - f_0 \Vert _\infty \le \eta  < \varepsilon.
\end{equation}
We clearly have that
$$
U(f_1) = U( \chi_B - \chi_A + f_0 \chi _C ) =  S(\chi _B) \chi _{ \sup h_2} -  S (\chi _A) \chi _{ \sup h_1} .
$$
Since $U$ is a positive operator it satisfies that
$$
\Vert U \Vert = \Vert U ( \1) \Vert =  \Vert  S(\chi_A) \chi _{\supp h_1} +  S( \chi_B) \chi _{\supp h_2} \Vert.
$$
For  each $t \in \Omega _2$ we obtain that
\begin{align*}
\bigl\vert \bigl ( U (\1) \bigr) (t) \bigr \vert & =  \bigl\vert \bigl ( S ( \chi_A) \chi _ {\sup h_1} +   S ( \chi_B) \chi _ {\sup h_2}   \bigr) (t) \bigr \vert  \\
& =   \bigl\vert \bigl (- S ( \chi_A) \chi _ {\sup h_1} +   S ( \chi_B) \chi _ {\sup h_2}   \bigr) (t) \bigr \vert       \sem \text{(by \eqref{h-dis-sup-norm-one})} \\
& =  \bigl\vert \bigl ( U(    \chi_B -  \chi_A  + f_0  \chi _ {C}   \bigr) (t) \bigr \vert \vert   \\
& =   \bigl\vert \bigl ( U( f_1 )  \bigr) (t) \bigr \vert .
\end{align*}
Since $Y$ is a Banach  function  space we conclude that 
$$
\Vert U \Vert = \Vert U( \1) \Vert = \Vert U (f_1) \Vert.
$$

 By \eqref{f1-f0} and    \eqref{T-S}, since $T$ attains its norm at $f_1$, the proof is finished
\end{proof}

The previous result was proved in \cite[Theorem 2.5]{ASpos} in case that the range is a $L_1$ space, so Theorem \ref{teo-BPBp-L-infty-UM-lattice} is already a far reaching extension of that result.

Our purpose now is to  obtain  a version of  Theorem \ref{teo-BPBp-L-infty-UM-lattice} for some abstract Banach lattices.  In order to  get this result, we notice that every uniformly monotone Banach lattice is order continuous (see \cite[Theorem 21, p. 371]{Bi} and \cite[Proposition 1.a.8]{LiTz}). It is also known that any order continuous Banach lattice with a weak  unit  is order isometric  to a Banach function space (see \cite[Theorem 1.b.14]{LiTz}).  From Theorem \ref{teo-BPBp-L-infty-UM-lattice}  and the previous argument we deduce the following result.

\begin{corollary}
	\label{cor-BPBp-pos-L-infty-UM-lattice}
	The pair $(L_\infty (\mu), Y) $ has the Bishop-Phelps-Bollob{\'a}s property for positive operators, for any positive measure $\mu$, whenever  $Y$ is a uniformly monotone Banach lattice with a weak  unit. Moreover,  the function $\eta$  satisfying Definition \ref{def-BPBp-pos} depends only on the modulus of uniform monotonicity of $Y$.
\end{corollary}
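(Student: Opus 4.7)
The plan is to reduce Corollary \ref{cor-BPBp-pos-L-infty-UM-lattice} directly to Theorem \ref{teo-BPBp-L-infty-UM-lattice} by passing from the abstract Banach lattice $Y$ to a concrete Banach function space representation, using two classical structural results from the theory of Banach lattices.

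First, I would invoke the fact that uniform monotonicity of the norm forces order continuity: the hypothesis that $\Vert x+y\Vert \le 1+\delta(\varepsilon)$ with $x,y\ge 0$ and $\Vert x\Vert=1$ implies $\Vert y\Vert\le\varepsilon$ precludes a positive non-null sequence $(x_\alpha)\downarrow 0$ from having norms bounded away from zero (see \cite[Theorem~21, p.~371]{Bi} and \cite[Proposition~1.a.8]{LiTz}). Thus $Y$ is order continuous. Combined with the existence of a weak unit, Lindenstrauss-Tzafriri's representation theorem \cite[Theorem~1.b.14]{LiTz} gives an order isometry $\Phi\colon Y\to\widetilde Y$, where $\widetilde Y$ is a Banach function space on some measure space $(\Omega_2,\nu)$.

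Next, I would transfer the problem through $\Phi$. Given a positive operator $S\in S_{L(L_\infty(\mu),Y)}$ and $f_0\in S_{L_\infty(\mu)}$ with $\Vert S(f_0)\Vert>1-\eta(\varepsilon)$, the operator $\widetilde S:=\Phi\circ S$ is a norm-one positive operator from $L_\infty(\mu)$ into $\widetilde Y$ (positivity and norm are preserved because $\Phi$ is an order isometry). Moreover $\widetilde Y$ inherits uniform monotonicity of $Y$ with the same modulus $\delta$. Applying Theorem \ref{teo-BPBp-L-infty-UM-lattice} to the pair $(L_\infty(\mu),\widetilde Y)$ with the same $\eta(\varepsilon)$ (which depends only on $\delta$) yields a positive $\widetilde T\in S_{L(L_\infty(\mu),\widetilde Y)}$ and $u_0\in S_{L_\infty(\mu)}$ with $\Vert\widetilde T(u_0)\Vert=1$, $\Vert u_0-f_0\Vert<\varepsilon$ and $\Vert\widetilde T-\widetilde S\Vert<\varepsilon$.

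Finally, I would set $T:=\Phi^{-1}\circ\widetilde T$. Because $\Phi^{-1}$ is again a positive linear isometry, $T$ is a positive norm-one operator from $L_\infty(\mu)$ to $Y$, it attains its norm at $u_0$, and $\Vert T-S\Vert=\Vert\widetilde T-\widetilde S\Vert<\varepsilon$. The assertion about $\eta$ depending only on the modulus of uniform monotonicity is inherited from Theorem \ref{teo-BPBp-L-infty-UM-lattice} since the $\delta$ for $\widetilde Y$ equals the $\delta$ for $Y$. There is no real obstacle here beyond checking these transfer statements; the only point worth writing out carefully is that the order isometry preserves both the operator norm and positivity of operators, which is immediate from the fact that $\Phi$ preserves the lattice and metric structure simultaneously.
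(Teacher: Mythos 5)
Your proposal is correct and follows exactly the paper's route: uniform monotonicity gives order continuity, the weak unit then yields an order-isometric representation as a Banach function space via \cite[Theorem 1.b.14]{LiTz}, and the statement transfers from Theorem \ref{teo-BPBp-L-infty-UM-lattice} through the order isometry. The paper leaves the transfer step implicit, whereas you write it out; there is no substantive difference.
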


\section{Bishop-Phelps--Bollob{\'a}s property for positive operators  in case that the domain is $c_0$.}

\vskip3mm

In this section we show parallel results to  Theorem \ref{teo-BPBp-L-infty-UM-lattice} and Corollary \ref{cor-BPBp-pos-L-infty-UM-lattice} in case that  the domain space is $c_0$. We begin with a   technical result whose proof is straightforward.

\vskip6mm

\begin{lemma}
	\label{le-norm-dom-c0}
	Let $Y$ be a Banach lattice  and $T \in L(c_0,Y)$ be a positive operator. Then the following assertions are satisfied:
	\begin{enumerate}
		\item[1)]  $\Vert T \Vert = \sup \{ \Vert T ( \sum _  {k=1} ^n e_i ) \Vert : n \in \N \} =  \sup \{ \Vert T ( \sum _  {k=1} ^n e_i) \Vert : n \ge N \}$, for all positive integer  $ N.$		
		\item[2)] $ \sup \{ \Vert T (x \chi _C ) \Vert :  x \in B_{c_0} \} =  \sup \{ \Vert T (\chi _{C \cap \{ k \in \N : k \le n\}} ) \Vert :  n \in \N, n \ge N \} $ for all $C \subset \N$ and all positive integer $N.$
	\end{enumerate}
	
\end{lemma}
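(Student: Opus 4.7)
My plan is to treat both parts by the same scheme: reduce to positive $x \in B_{c_0}$ using the pointwise bound $|Tx| \le T|x|$ (valid since $T$ is positive), and then exploit the fact that any positive $x \in B_{c_0}$ is the $c_0$-norm limit of its finite truncations $x\chi_{\{1,\ldots,n\}}$ (because $x(k)\to 0$), which in turn are dominated coordinatewise by $\sum_{k=1}^n e_k$. Since $T$ is positive and the norm on $Y$ is a lattice norm, these coordinate dominations transfer to norm inequalities on the $Y$-side.

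For part (1), the inequality $\sup_{n}\Vert T(\sum_{k=1}^n e_k)\Vert \le \Vert T\Vert$ is immediate from $\Vert\sum_{k=1}^n e_k\Vert_{c_0}=1$. For the reverse inequality I take a positive $x\in B_{c_0}$; the bound $0\le x\chi_{\{1,\ldots,n\}}\le \sum_{k=1}^n e_k$ together with positivity of $T$ and the lattice norm yields $\Vert T(x\chi_{\{1,\ldots,n\}})\Vert \le \Vert T(\sum_{k=1}^n e_k)\Vert$, and letting $n\to\infty$ (using continuity of $T$ and $x\chi_{\{1,\ldots,n\}}\to x$ in $c_0$) gives $\Vert Tx\Vert\le \sup_n\Vert T(\sum_{k=1}^n e_k)\Vert$. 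Taking the supremum over positive $x\in B_{c_0}$ and using $|Tx|\le T|x|$ for arbitrary $x\in B_{c_0}$ delivers the first equality. The second equality follows since $(\sum_{k=1}^n e_k)_n$ is increasing, $T$ is positive, and the norm of $Y$ is a lattice norm, so $(\Vert T(\sum_{k=1}^n e_k)\Vert)_n$ is non-decreasing in $n$; hence $\sup_{n\ge N}$ coincides with $\sup_{n\in\N}$.

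For part (2), the same recipe applies to $x\chi_C$ in place of $x$. The reduction $\Vert T(x\chi_C)\Vert \le \Vert T(|x|\chi_C)\Vert$ lets one assume $x\ge 0$; the truncation then satisfies $x\chi_{C\cap\{1,\ldots,n\}}\le \chi_{C\cap\{1,\ldots,n\}}$, so positivity combined with the lattice norm gives $\Vert T(x\chi_{C\cap\{1,\ldots,n\}})\Vert \le \Vert T(\chi_{C\cap\{1,\ldots,n\}})\Vert$, and passing to the limit in $n$ produces one of the inequalities. The reverse inequality is immediate because $\chi_{C\cap\{1,\ldots,n\}}=\chi_{\{1,\ldots,n\}}\chi_C$ with $\chi_{\{1,\ldots,n\}}\in B_{c_0}$, so each such expression is one of the quantities being taken on the left. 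Replacing $n\in\N$ by $n\ge N$ is again justified by the monotonicity of $(\Vert T(\chi_{C\cap\{1,\ldots,n\}})\Vert)_n$.

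There is no substantive obstacle: the only point to keep in mind is that $c_0$ has no order unit, so there is no analogue of Lemma~\ref{le-pos} computing $\Vert T\Vert$ by evaluating at a single distinguished vector; the truncation-and-continuity argument above is the natural substitute, and that is why the lemma is stated in terms of a supremum over the finite partial sums $\sum_{k=1}^n e_k$ rather than a single element.
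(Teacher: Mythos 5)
Your argument is correct and complete: the reduction to positive $x$ via $|Tx|\le T|x|$, the coordinatewise domination $0\le x\chi_{\{1,\dots,n\}}\le\sum_{k=1}^n e_k$ transferred through positivity of $T$ and the lattice norm of $Y$, the passage to the limit using $x\chi_{\{1,\dots,n\}}\to x$ in $c_0$, and the monotonicity of the norms to dispose of the restriction $n\ge N$ all work as stated. The paper omits the proof entirely (it is declared ``straightforward''), so there is nothing to compare against; your truncation argument is exactly the standard one the authors had in mind.
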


\begin{theorem}
	\label{teo-BPBp-c0-UM}
The pair $(c_0, Y)$ satisfies the  Bishop-Phelps--Bollob{\'a}s property for positive operators, for any  uniformly monotone Banach function space $Y.$  Moreover,  the function $\eta$  satisfying Definition \ref{def-BPBp-pos} depends only on the modulus of uniform monotonicity of $Y$.
\end{theorem}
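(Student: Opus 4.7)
The plan is to mimic the proof of Theorem~\ref{teo-BPBp-L-infty-UM-lattice} with the constant $\1$ replaced by the finite truncations $\sum_{k=1}^n e_k$, using Lemma~\ref{le-norm-dom-c0} wherever positivity of $S$ on $\1$ was invoked in the $L_\infty$ case. Assume $Y$ is a Banach function space on $(\Omega_2,\nu)$. Fix $0<\varepsilon<1$, and choose $\eta=\eta(\varepsilon)<\varepsilon/18$ satisfying \eqref{eta-small}, exactly as before. Given a positive $S\in S_{L(c_0,Y)}$ and $x_0\in S_{c_0}$ with $\Vert S(x_0)\Vert > 1/(1+\delta(\eta^2))$, we may assume $|x_0|\le\1$. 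Define
$$
A=\{n\in\N:-1\le x_0(n)<-1+\eta\},\sep B=\{n\in\N:1-\eta<x_0(n)\le 1\},\sep C=\{n\in\N:|x_0(n)|\le 1-\eta\}.
$$
Because $x_0\in c_0$, both $A$ and $B$ are finite, while $C$ may be infinite; in particular $\chi_A,\chi_B,x_0\chi_C\in c_0$.

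The first step is to recover the analog of \eqref{S-C-small}. For each $n\in\N$, write $C_n=C\cap\{1,\dots,n\}$. Then $|x_0|+\eta\chi_{C_n}\in B_{c_0}$, so positivity of $S$ and the assumption $\|S(x_0)\|>1/(1+\delta(\eta^2))$ give
$$
\Vert S(|x_0|)+S(\eta\chi_{C_n})\Vert\le 1\le(1+\delta(\eta^2))\Vert S(|x_0|)\Vert,
$$
and uniform monotonicity of $Y$ yields $\Vert S(\chi_{C_n})\Vert\le\eta$ for every $n$. By Lemma~\ref{le-norm-dom-c0}(2) applied to $C$, this is exactly the bound $\sup\{\Vert S(x\chi_C)\Vert:x\in B_{c_0}\}\le\eta$, which replaces \eqref{S-C-small}. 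In particular $\Vert S(x_0\chi_C)\Vert\le\eta$ (by a direct limit, or by noting $|x_0\chi_C|\le\chi_{C_n}$ eventually). The bounds $\Vert S(\chi_A)+S(x_0\chi_A)\Vert\le\eta$ and $\Vert S(x_0\chi_B)-S(\chi_B)\Vert\le\eta$ are immediate from the definitions of $A$ and $B$ together with $|x_0|\le\1$, and the same chain \eqref{SA-SB} produces $\Vert S(\chi_B)-S(\chi_A)\Vert>1/(1+\delta(\varepsilon/18))$. Since $\Vert S(\chi_A)+S(\chi_B)\Vert=\Vert S(\chi_{A\cup B})\Vert\le 1$, Lemma~\ref{le-dis-supp} applied with parameter $\varepsilon/6$ supplies disjointly supported positives $h_1,h_2\in Y$ with $\Vert h_1+h_2\Vert=1$ and $\Vert h_i-S(\chi_{A_i})\Vert<\varepsilon/6$ (writing $A_1=A$, $A_2=B$).

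Next, define $U\in L(c_0,Y)$ by $U(x)=S(x\chi_A)\chi_{\supp h_1}+S(x\chi_B)\chi_{\supp h_2}$; this is linear, positive, and well-defined since $A,B$ are finite. The bound $\Vert U-S\Vert<\varepsilon/2$ follows from the decomposition $(U-S)(x)=-S(x\chi_A)\chi_{\Omega_2\setminus\supp h_1}-S(x\chi_B)\chi_{\Omega_2\setminus\supp h_2}-S(x\chi_C)$; positivity lets us dominate $|S(x\chi_A)|\le S(\chi_A)$ for $x\in B_{c_0}$ (and similarly for $B$), and the $C$-term is controlled by the $\sup$ in Lemma~\ref{le-norm-dom-c0}(2), bounded by $\eta$. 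Consequently $\big|\Vert U\Vert-1\big|<\varepsilon/2$, so $\Vert U\Vert>0$ and $T:=U/\Vert U\Vert\in S_{L(c_0,Y)}$ is positive with $\Vert T-S\Vert<\varepsilon$.

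The norm-attainment step, where the $L_\infty$-proof used $U(\1)$, is the main point that needs adjustment and will be the principal subtlety. Set $u_0=\chi_B-\chi_A+x_0\chi_C\in S_{c_0}$; then $\Vert u_0-x_0\Vert_\infty\le\eta<\varepsilon$, and
$$
U(u_0)=-S(\chi_A)\chi_{\supp h_1}+S(\chi_B)\chi_{\supp h_2}.
$$
Because $\supp h_1$ and $\supp h_2$ are disjoint, $|U(u_0)|=S(\chi_A)\chi_{\supp h_1}+S(\chi_B)\chi_{\supp h_2}$ pointwise, so $\Vert U(u_0)\Vert=\Vert S(\chi_A)\chi_{\supp h_1}+S(\chi_B)\chi_{\supp h_2}\Vert$. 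Fixing $N=\max(A\cup B)$ and using Lemma~\ref{le-norm-dom-c0}(1), for every $n\ge N$ we have $U\bigl(\sum_{k=1}^n e_k\bigr)=S(\chi_A)\chi_{\supp h_1}+S(\chi_B)\chi_{\supp h_2}$, since the $C$-components are killed by the definition of $U$. Thus $\Vert U\Vert=\sup_{n\ge N}\Vert U(\sum_{k=1}^n e_k)\Vert=\Vert U(u_0)\Vert$, so $\Vert T(u_0)\Vert=1$, completing the proof. The dependence of $\eta$ on $\varepsilon$ is the same as in Theorem~\ref{teo-BPBp-L-infty-UM-lattice}, hence depends only on the modulus $\delta$.
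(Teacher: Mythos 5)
Your proposal is correct and follows essentially the same route as the paper's own proof: the same sets $A,B,C$, the same use of Lemma \ref{le-norm-dom-c0} via the truncations $C_n$ to control $S$ on $C$, the same application of Lemma \ref{le-dis-supp}, the same operator $U$ (called $R$ in the paper) and the same norm-attaining element $u_0=\chi_B-\chi_A+x_0\chi_C$. The only cosmetic difference is that you justify $\Vert U\Vert=\Vert U(u_0)\Vert$ through Lemma \ref{le-norm-dom-c0}(1), while the paper argues directly from positivity that $\Vert U\Vert=\Vert U(\chi_{A\cup B})\Vert$; both are valid.
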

\begin{proof}
	The proof of this result is similar  to the proof of Theorem \ref{teo-BPBp-L-infty-UM-lattice}. We include the details of the proof for the sake of completeness.

	Assume that  $Y$ is a Banach  function space on the measure space   $(\Omega, \mu).$ 
	
	Let $ 0 < \varepsilon < 1,$	 and  assume that $Y$  satisfies  the definition of uniform monotonicity  with the function $\delta .$ 
	Choose a   positive real number such that $ \eta  < \frac{\varepsilon}{18}$ satisfying also
	\begin{equation}
	\label{eta-small-c0}
	\frac{1}{ 1 + \delta (\frac{\varepsilon}{ 18})} < \frac{1}{ 1 + \delta (\eta ^2)}  - 3 \eta.
	\end{equation}
	
	Assume that $x_0 \in S_{ c_0} , S\in  S_{ L( c_0, Y)}	$ and  $S$ is a positive operator such  that 
	$$
	\Vert S(x_0) \Vert > \frac{1}{ 1 + \delta (\eta ^2)} .
	$$
	Consider the sets  $A,B$ and $C$ defined  by
	$$
	A=\{ k \in \N : -1 \le x_0(k) < -1 + \eta \}, \sep B=\{ k \in \N :  1- \eta <  x_0(k) \le 1 \} \sep 
	$$
	and
	$$
	\sep C=\{ k \in \N : \vert x_0(k) \vert \le 1 - \eta \}. 
	$$
	Clearly $\{A,B,C\}$ is a partition  of $\N$.  Also the subsets $A$ and $B$ are finite sets, so $\chi_A$ and $\chi_B$ belong to $c_0$.  For a positive integer $n$ denote by $C_n = C \cap \{ k \in \N : k \le n\}$.  We clearly have that $\vert x_0\vert + \eta \chi_{C_n} \in S_{ c_0}$ for all $n \in \N.$     Since $S$ is a positive operator for each positive integer $n$  it is satisfied that 
	\begin{align*}
	\Vert S ( \vert x_0 \vert + \eta \chi_{C_n}) \Vert &  \le  1     \\
	&<  \Vert S (x_0) \Vert  \bigl( 1 + \delta (\eta ^2) \bigr)  \\
	&\le   \Vert S ( \vert x_0 \vert ) \Vert  \bigl( 1 + \delta (\eta ^2) \bigr) .
	\end{align*}
	In view of the uniform monotonicity of $Y$ the previous inequality implies  that
	$
	\Vert  S (\eta \chi _{C_n}) \Vert \le \eta ^2$ for all $ n \in \N$ and so
	$$	
	\Vert  S ( \chi _{C_n}) \Vert \le \eta, \sem \forall n \in \N.
	$$
	From Lemma \ref{le-norm-dom-c0} we deduce that 	
	\begin{equation}
	\label{S-C-small-c0}
	\Vert S( x \chi _{C})   		\Vert  \le  \sup \{ \Vert S ( \chi _{C_n})  \Vert  : n \in \N \} \le \eta, \sem \forall x \in B_{c_0}.
	\end{equation}

	From the definition of the sets $A$ and $B$ we also have that $\Vert  \chi_A + x_0 \chi_A \Vert _\infty \le \eta $ and   $\Vert x_0 \chi_B - \chi_B \Vert _\infty \le \eta, $  so
	\begin{equation}
	\label{S-x0A-close-SA}
	\Vert  S (\chi_A)   + S(x_0 \chi_A )) \Vert \le \eta \sem \text{and } \sem \Vert  S (  x_0 \chi_B) - S(\chi_B) \Vert \le \ \eta.
	\end{equation}
	We clearly obtain that
	\begin{align}
	\label{SA-SB-c0}
	\Vert S(\chi_B) - S(\chi _A ) \Vert &  \ge  \Vert S( x_0\chi_B) + S( x_0\chi _A ) \Vert - \Vert S( x_0\chi_B) -S(\chi _B ) \Vert - \Vert S(\chi _A ) +  S( x_0\chi_A)  \Vert
	\nonumber \\
	&\ge    \Vert S(x_0) \Vert   - \Vert S( x_0\chi_C)  \Vert - 2 \eta \sem \text{(by \eqref{S-x0A-close-SA})}
	\nonumber\\
	&>   \frac{1}{ 1+ \delta (\eta ^2)}  - \Vert S( x_0\chi_C)  \Vert - 2 \eta 
	\\
	&\ge    \frac{1}{ 1+ \delta (\eta ^2)}  - 3 \eta \sem \text{(by \eqref{S-C-small-c0})}
	\nonumber\\
	& >   \frac{1}{ 1+ \delta ( \frac{\varepsilon}{18})}.
	\nonumber
	\end{align}
	
	Since $S $ is a positive operator and    $\Vert S ( \chi_A) +  S(\chi _B) \Vert \le 1$, in view of \eqref{SA-SB-c0} we can apply Lemma \ref{le-dis-supp}. Hence there are  two positive functions  $g_1$ and $g_2$ in $Y$  satisfying the following conditions
	\begin{equation}
	\label{g1-SA-g2-SB}
	\Vert g_1- S(\chi_A) \Vert  <   \frac{\varepsilon}{6},  \sem   \Vert g_2- S(\chi_B) \Vert   <    \frac{\varepsilon}{6} ,
	\end{equation}
	\begin{equation}
	\label{g-dis-sup-norm-one}
	\supp g_1 \cap  \supp g_2 = \varnothing  \sem \text{and} \sem \Vert g_1+g_2 \Vert  =1.
	\end{equation}  	
	Hence
	\begin{align}
	\label{S-A-out-sup-g1}	
	\Vert  S (\chi_A) \chi _{ \Omega \backslash \supp g_1} \Vert  &  =  \Vert \bigl  (g_1 - S(\chi_A) \bigr)   \chi _{ \Omega  \backslash \supp g_1} \Vert 	  
	\nonumber  \\
	&\le     \Vert g_1 -  S (\chi_A)  \Vert  \\
	& <  \frac{ \varepsilon}{6}. \sem \text{\rm (by \eqref{g1-SA-g2-SB})}
	\nonumber
	\end{align}
	and 
	\begin{align}
	\label{S-B-out-sup-g2}	
	\Vert  S (\chi_B) \chi _{ \Omega  \backslash \supp g_2} \Vert  &  =  \Vert   \bigl( g_2 - S(\chi_B) \bigr)   \chi _{ \Omega \backslash \supp g_2} \Vert 	  
	\nonumber  \\
	&\le     \Vert g_2 -  S (\chi_B)  \Vert  \\
	& <  \frac{ \varepsilon}{6} \sem \text{\rm (by \eqref{g1-SA-g2-SB})}.
	\nonumber
	\end{align}

	Now we define the operator $R:  c_0 \llll Y$ as follows
	$$
	R(x)= S(x \chi_A) \chi _{\supp g_1} +  S(x \chi_B) \chi _{\supp g_2} \seg (x \in c_0).
	$$
	Since $Y$ is a Banach function  space and $S \in L(c_0,Y)$, $R$ is well defined and belongs to  $L(c_0,Y)$.   The operator $R$ is  positive since $S$ is positive.  By using Lemma \ref{le-norm-dom-c0}, for any element  $x \in B_{ c_0}$ we  have that 
	\begin{align*}
	\Vert  (R-  S)(x)  \Vert  & =  \Vert  S(x \chi _A) \chi _{ \Omega \backslash \supp g_1} +  S(x \chi_B) \chi _{ \Omega  \backslash \supp g_2}    + S(x \chi_C) \Vert   
	\nonumber\\
	&  \le  \Vert  S( \chi_A) \chi _{ \Omega \backslash \supp g_1} \Vert  +  \Vert  S( \chi_B) \chi _{\Omega \backslash  \supp g_2}   ) \Vert  +   \Vert  S( x \chi_{C} )   \Vert 
	\nonumber\\
	& < \frac{\varepsilon}{3} +   \eta  < \frac{\varepsilon}{2} \sem \text{(by   \eqref{S-A-out-sup-g1},  \eqref{S-B-out-sup-g2} and  \eqref{S-C-small-c0})}.  
	\nonumber 
	\end{align*}
	As a consequence
	\begin{equation}
	\label{R-S}
	\Vert R - S \Vert <  \frac{\varepsilon}{2} \sem \text{and } \sem     	\bigl\vert \Vert R \Vert - 1  \bigr \vert < \frac{\varepsilon}{2},	
	\end{equation}
	so $R \ne 0$.	 
	
	We put  $T = \frac{R}{ \Vert R \Vert }$.    The operator  $T$   is  positive  since $R$ is  positive. Of course $T \in S_{ L( c_0, Y)}$. Now we estimate the distance from $T$ to $S$ as follows 
	\begin{align}
	\label{T-S-c0}
	\Vert T - S \Vert & \le  \Vert T - R\Vert + \Vert R - S \Vert  
	\nonumber   \\
	& < \Bigl \Vert  \frac{R} { \Vert R \Vert } - R \Bigr \Vert + \frac{\varepsilon}{2} \sem \text{(by \eqref{R-S})} \\
	& = \bigl \vert 1 - \Vert R \Vert \bigr \vert + \frac{\varepsilon}{2} 
	\nonumber \\
	& < \varepsilon \sem \text{(by \eqref{R-S})}.
	\nonumber 
	\end{align}

	The element  $u_0$ given by $u_0 = \chi_B- \chi_A + x_0 \chi _C $ belongs  to the unit sphere of $c_0$ since $x_0 \in  S_{ c_0}.$  It also  satisfies 
	\begin{equation}
	\label{u0-x0}
	\Vert u_0 - x_0 \Vert \le \eta < \varepsilon.
	\end{equation}
	
	Since $g_1$ and $g_2$ have disjoint supports,  $Y$ is a Banach  function  space and $R$ is a positive operator,  we also have that
	\begin{eqnarray*}
		\Vert R (u_0) \Vert & = \Vert S ( -\chi_A) \chi_{\supp g_1} +  S ( \chi_B) \chi_{\supp g_2} \Vert   \\
		& =  \Vert S ( \chi_A) \chi_{\supp g_1}  + S ( \chi_B) \chi_{\supp g_2} \Vert  \\
		&  = \Vert R (\chi_{A \cup B})\Vert  = \Vert R\Vert  .
	\end{eqnarray*}
	Hence the operator $T$ also attains its norm at $u_0$.  In view of \eqref{T-S-c0} and 
	\eqref{u0-x0} the proof is finished.

\end{proof}

\begin{corollary}
	\label{cor-BPBp-pos-c0-UM-lattice}
	The pair $(c_0, Y) $ has the Bishop-Phelps--Bollob{\'a}s property for positive operators,  for any uniformly monotone Banach lattice  $Y.$  Moreover,  the function $\eta$  satisfying Definition \ref{def-BPBp-pos} depends only on the modulus of uniform monotonicity of $Y$.
\end{corollary}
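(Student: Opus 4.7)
The plan is to reduce the statement to Theorem \ref{teo-BPBp-c0-UM} by replacing $Y$ with a separable closed sublattice that, via the representation theory of order continuous Banach lattices, is isometrically order isomorphic to a Banach function space. The function $\eta$ produced will be exactly the one coming from Theorem \ref{teo-BPBp-c0-UM}; since that function depends only on the modulus of uniform monotonicity $\delta$ and not on any concrete function space realisation, this choice is legitimate and will automatically yield the final clause about $\eta$.

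Concretely, given $0 < \varepsilon < 1$, a positive operator $S \in S_{L(c_0,Y)}$ and $x_0 \in S_{c_0}$ with $\Vert S(x_0) \Vert > 1 - \eta (\varepsilon)$, I would let $Y_0$ denote the closed sublattice of $Y$ generated by the countable set $\{S(e_n) : n \in \N\}$. Because every $x \in c_0$ is the $c_0$-norm limit of its finite partial sums $\sum_{k \le n} x(k) e_k$ and $S$ is continuous, one has $S(c_0) \subseteq Y_0$, so that $S$ can be regarded as an element of $S_{L(c_0, Y_0)}$. By construction $Y_0$ is separable, and it inherits from $Y$ the property of uniform monotonicity with the same modulus $\delta$, because the defining inequality only involves the norms of positive elements that remain in $Y_0$.

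At this point I would invoke the chain of classical facts already used in the derivation of Corollary \ref{cor-BPBp-pos-L-infty-UM-lattice}: uniform monotonicity implies order continuity (\cite[Theorem 21, p. 371]{Bi}, \cite[Proposition 1.a.8]{LiTz}); every separable Banach lattice admits a weak unit (\cite[Lemma 3, p. 367]{Bi}); and every order continuous Banach lattice with a weak unit is order isometric to a Banach function space (\cite[Theorem 1.b.14]{LiTz}). Thus $Y_0$ is order isometric to a Banach function space, and Theorem \ref{teo-BPBp-c0-UM} applies to $S \in S_{L(c_0,Y_0)}$, producing a positive $T \in S_{L(c_0, Y_0)}$ and $u_0 \in S_{c_0}$ such that $\Vert T(u_0) \Vert = 1$, $\Vert u_0 - x_0 \Vert < \varepsilon$ and $\Vert T - S \Vert_{L(c_0,Y_0)} < \varepsilon$.

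The only thing left to check is that $T$, viewed as an operator $c_0 \to Y$ via the isometric lattice inclusion $Y_0 \hookrightarrow Y$, still fulfils all three conditions of Definition \ref{def-BPBp-pos}; this is automatic because the operator norms of $T$ and $T - S$, the positivity of $T$, and the value $\Vert T(u_0) \Vert$ are unchanged by the embedding. The main obstacle I anticipate is not technical but conceptual: one must be confident that uniform monotonicity with the same modulus passes to closed sublattices and that $S(c_0) \subseteq Y_0$, both of which are routine consequences of the definitions and of $c_0$ having a Schauder basis. With those observations in place the reduction to the function space case is complete.
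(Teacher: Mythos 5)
Your proposal is correct and follows essentially the same route as the paper: restrict to the separable closed sublattice generated by the image of $S$ (the paper uses the sublattice generated by $S(c_0)$, you use the one generated by $\{S(e_n)\}$, which amounts to the same thing), note that it inherits uniform monotonicity with the same modulus and has a weak unit by Birkhoff's lemma, represent it as a Banach function space via order continuity, and apply Theorem \ref{teo-BPBp-c0-UM}, with the uniformity of $\eta$ in $\delta$ carrying the argument back to $Y$. No gaps.
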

\begin{proof}
By using the same argument of Corollary  \ref{cor-BPBp-pos-L-infty-UM-lattice}, in view of  Theorem \ref{teo-BPBp-c0-UM},  we obtain the statement  when $Y$ is a uniformly monotone Banach lattice with a weak  unit.
Notice also that in Theorem \ref{teo-BPBp-c0-UM} the  function  $\eta$ appearing in Definition  \ref{def-BPBp-pos} depends only on the modulus of uniform  monotonicity of the Banach function  space on the range. 

Assume now that  $Y$ is a uniformly monotone Banach lattice with modulus of uniform monotonicity $\delta$. Since  $c_0$ is separable, if $T \in  L(c_0,Y)$, the space $T(c_0)$ is also separable. It is well known that  
%
%
the Banach lattice $X$ generated by $T(c_0)$ is a separable Banach lattice of $Y$ and so, it also satisfies the definition of uniform monotonicity with the function $\delta$.   By \cite[Lemma 3, p. 367]{Bi} $X$  has a weak unit. By the previous arguments, the pair $(c_0,X)$ satisfies the  Bishop-Phelps--Bollob{\'a}s property for positive operators  with a function $\eta $ depending  only on $\delta$. As a consequence, the pair $(c_0,Y)$ also has the  Bishop-Phelps--Bollob{\'a}s property for positive operators.
\end{proof}

Kim proved that the pair $(c_0, Y)$ has the Bishop-Phelps-Bollob{\'a}s property for operators when $Y$ is uniformly convex  (see \cite[Corollary 2.6]{KiI}). Notice that the previous result is a version of that result for positive operators.

\newpage


\section{Results are optimal when the range is strictly monotone}

Our intention now is to show that under some mild assumption on the range space,   Corollaries  \ref{cor-BPBp-pos-L-infty-UM-lattice}
and \ref{cor-BPBp-pos-c0-UM-lattice}  are optimal. For this end we  need some preliminary results.

\begin{lemma}
	\label{normpp}
	Let $M, N$ and $Y$ be normed spaces. Assume  that $T \in {L(M \oplus_{\infty} N, Y)} $ and   $ m + n \in S_{M \oplus_{\infty} N}$ satisfy that  $\|n\| < 1$ and  $\|T(m+n)\| = \|T\|.$ Then  $\|T(m)\| = \|T\|$.
\end{lemma}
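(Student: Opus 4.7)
The plan is to exploit the rigidity of $\ell_\infty$-sum norms: since $\|n\| < 1$ strictly, we can enlarge $n$ slightly without leaving the unit ball, producing a convex combination that forces $T(m)$ to attain the norm.

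First I would observe that from $\max(\|m\|,\|n\|) = \|m+n\|_{M\oplus_\infty N} = 1$ and $\|n\|<1$ we get $\|m\| = 1$, and in particular $\|T(m)\| \le \|T\|$ automatically. The case $n=0$ is trivial, so assume $n\neq 0$ and set $s = 1/\|n\| > 1$. Then $\|m + sn\|_{M\oplus_\infty N} = \max(\|m\|, s\|n\|) = \max(1,1) = 1$, hence $\|T(m+sn)\| \le \|T\|$.

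The key identity is
\[
m + n = \Bigl(1 - \tfrac{1}{s}\Bigr) m + \tfrac{1}{s}(m + sn),
\]
which is a convex combination since $1/s \in (0,1)$. Applying $T$ and taking norms,
\[
\|T\| = \|T(m+n)\| \le \Bigl(1 - \tfrac{1}{s}\Bigr)\|T(m)\| + \tfrac{1}{s}\|T(m+sn)\| \le \Bigl(1 - \tfrac{1}{s}\Bigr)\|T(m)\| + \tfrac{1}{s}\|T\|.
\]
Rearranging gives $(1 - 1/s)\|T\| \le (1 - 1/s)\|T(m)\|$, and dividing by $1 - 1/s > 0$ yields $\|T\| \le \|T(m)\|$. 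Combined with the trivial inequality $\|T(m)\| \le \|T\|\cdot\|m\| = \|T\|$, the equality follows.

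There is no real obstacle; the only thing to check carefully is that $1 - 1/s$ is strictly positive (guaranteed by $\|n\| < 1$, which is exactly the hypothesis that prevents the argument from collapsing) and that $m + sn$ still lies in the closed unit ball of the $\ell_\infty$-sum, which is precisely what the $\ell_\infty$ structure guarantees.
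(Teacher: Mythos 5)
Your proof is correct and follows essentially the same route as the paper: with $s=1/\|n\|$ your convex combination $m+n=(1-\tfrac1s)m+\tfrac1s(m+sn)$ is exactly the paper's decomposition $m+n=(1-t)m+t(m+\tfrac1t n)$ with $t=\|n\|$. You merely spell out the convexity/rearrangement step that the paper leaves implicit, which is fine.
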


\begin{proof}
	If we put $t = \|n\|$, then $ 0 \leq t < 1$. In case that  $t = 0$  it is clear that  $T$ attains its norm at  $m$. Otherwise $0 < t <1$, the element $m + \frac{1}{t} n $ belongs to the unit sphere of $ M \oplus_{\infty} N $ and 
	$$
	m + n = (1- t ) m + t \Bigl( m + \frac{1}{t} n \Bigr).
	$$
	Since  $\|T(m+n)\| = \|T\|, $     and the function 
	$x \mapsto  \Vert T(x) \Vert $  is convex on $M \oplus N$,  we conclude that  $\|T(m)\| = \|T\|$.
\end{proof}

Next result is probably known,  but we did not find a reference in the literature and we include it for the sake of completeness.

\begin{proposition}
	\label{pro-char-UM}
	Let $Y$ be a Banach lattice. The following conditions are equivalent.
	
	\begin{enumerate}
		\item[1)]  $Y$ is uniformly monotone.
		
		\item[2)]  For  every $0< \varepsilon <1$, there is $ \eta (\varepsilon) > 0$ satisfying 
		$$
		u \in Y, v \in S_Y,  \sem 0 \le u\le v \sem \text{and} \sem  \|v - u\|  >   1 - \eta (\varepsilon) \sep \Rightarrow \sep \| u\| \leq  \varepsilon .
		$$

		\item[3)]  For  every $0< \varepsilon <1$, there is $ \eta (\varepsilon) > 0$ satisfying 
		$$
		u, v \in Y,  \sem 0 \le u\le v \sem \text{and} \sem  \|v - u\|  >  ( 1 - \eta (\varepsilon) ) \|v\| \sep \Rightarrow \sep \| u\| \leq  \varepsilon  \|v\|.
		$$
		
	\end{enumerate}
	Moreover, if 2) is satisfied, $Y$ is uniformly monotone with  $\delta (\varepsilon)= \eta (\frac{\varepsilon}{2} )$.  In case that  $Y$ is uniformly monotone with $ \delta (\varepsilon)$  conditions 2) and 3) are satisfied  with $\eta (\varepsilon)= \frac{ \delta (\varepsilon)}{ 1 + \delta (\varepsilon)}.$
\end{proposition}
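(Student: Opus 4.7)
The plan is to prove the equivalences in the cycle $1 \Rightarrow 2 \Leftrightarrow 3 \Rightarrow 1$, keeping careful track of the quantitative relations between the moduli $\delta$ and $\eta$.

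The implication $3 \Rightarrow 2$ is trivial: specialize to $v \in S_Y$. For $2 \Rightarrow 3$, given $0 \le u \le v$ with $v \ne 0$ and $\|v-u\| > (1-\eta(\varepsilon))\|v\|$, apply 2) to the pair $u' = u/\|v\|$, $v' = v/\|v\|$, observing that $v' \in S_Y$, $0 \le u' \le v'$ and $\|v' - u'\| > 1 - \eta(\varepsilon)$; this gives $\|u'\| \le \varepsilon$, i.e.\ $\|u\| \le \varepsilon \|v\|$. Hence 2) and 3) are equivalent with the \emph{same} function $\eta$.

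For $1 \Rightarrow 2$: assume $Y$ is uniformly monotone with modulus $\delta$, and set $\eta(\varepsilon) = \delta(\varepsilon)/(1+\delta(\varepsilon))$. Given $v \in S_Y$, $0 \le u \le v$ and $\|v-u\| > 1-\eta(\varepsilon)$, let $w = v-u \ge 0$. Then $\|w\| \ge 1 - \eta(\varepsilon) > 0$, so $w' := w/\|w\| \in S_Y$. Since $w + u = v$ and $\|v\| = 1$, we get $\bigl\|\,w' + u/\|w\|\,\bigr\| = 1/\|w\| \le 1/(1-\eta(\varepsilon)) = 1 + \delta(\varepsilon)$. Uniform monotonicity, applied to the positive vectors $w'$ and $u/\|w\|$, yields $\|u\|/\|w\| \le \varepsilon$ and hence $\|u\| \le \varepsilon$.

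For $2 \Rightarrow 1$: assume 2) holds with a function $\eta$, and set $\delta(\varepsilon) = \eta(\varepsilon/2)$. Given $x \in S_Y$ and $y \ge 0$ with $\|x+y\| \le 1 + \delta(\varepsilon)$, put $v = (x+y)/\|x+y\|$ and $u = y/\|x+y\|$, so that $v \in S_Y$, $0 \le u \le v$, and
\begin{equation*}
\|v - u\| = \frac{\|x\|}{\|x+y\|} = \frac{1}{\|x+y\|} \ge \frac{1}{1+\eta(\varepsilon/2)} > 1 - \eta(\varepsilon/2),
\end{equation*}
the last strict inequality coming from $1/(1+t) = 1 - t/(1+t) > 1-t$ for $t > 0$. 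Condition 2) gives $\|u\| \le \varepsilon/2$, and so $\|y\| = \|u\|\cdot\|x+y\| \le (\varepsilon/2)(1 + \eta(\varepsilon/2)) \le \varepsilon$, since we may assume $\eta(\varepsilon/2) \le 1$.

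There is no real obstacle here; the only care needed is the bookkeeping that produces exactly the asserted relations $\eta(\varepsilon) = \delta(\varepsilon)/(1+\delta(\varepsilon))$ in one direction and $\delta(\varepsilon) = \eta(\varepsilon/2)$ in the other, and the use of the elementary inequality $1/(1+t) > 1-t$ for $t \in (0,1)$ to pass between the "additive" form of the hypothesis (as in 1)) and the "subtractive" form (as in 2)--3)).
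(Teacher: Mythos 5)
Your proof is correct and follows essentially the same route as the paper: the implication $1)\Rightarrow 2)$ with $\eta(\varepsilon)=\delta(\varepsilon)/(1+\delta(\varepsilon))$, the equivalence of 2) and 3) by homogeneity, and the converse with $\delta(\varepsilon)=\eta(\varepsilon/2)$ via the same inequality $1/(1+t)>1-t$ (the paper phrases the last step through 3) rather than 2), but that is only a normalization difference). The only points you leave implicit — that $\|v-u\|\le\|v\|=1$ because the norm is a lattice norm, and that one may shrink $\eta$ to ensure $\eta\le 1$ — are harmless and also used tacitly in the paper.
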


\begin{proof} 
	1) $\Rightarrow$ 2)	
	\newline
	Assume that  $Y$ is  uniformly monotone with  $\delta (\varepsilon) $.  Assume that $0< \varepsilon <1$ and  $u$ and $v$ are  elements  in $Y$  such that 
	$$ 
	\|v\|=1,  \sem  		0 \leq u \leq v \sem     \text{and}  \sem \|v - u\| > 1 - \eta  (\varepsilon )    =      1- \frac{\delta (\varepsilon )}{1 + \delta (\varepsilon )}.
	$$
	Put $x = v- u , y = u$. So $x , y \geq 0$ and 
	\begin{align}
	\|x + y\| =\|v\|=1 &   = (1 + \delta (\varepsilon )) \Bigl( \frac{1}{1 + \delta (\varepsilon )} \Bigr)
	\nonumber   \\
	& = (1 + \delta (\varepsilon ))  \Bigl( 1- \frac{\delta (\varepsilon )}{1 + \delta (\varepsilon )}\Bigr)
	\nonumber \\ 
	&  <  (1 + \delta (\varepsilon )) \|v - u\|
	\nonumber \\
	& = (1 + \delta (\varepsilon )) \|x\|.
	\nonumber 
	\end{align}

	Hence from uniform monotonicity of $Y$ we conclude $\|u\| = \|y\| \leq \varepsilon \Vert x \Vert \le \varepsilon .$
	
	
	2) $\Rightarrow$ 3)	
	\newline
	Trivial
	
	3) $\Rightarrow$ 1)	
	\newline
	Assume  that $0< \varepsilon <1$,  $x \in S_Y,$  $y \in Y$ satisfy that  $x , y \geq 0$ and $\|x + y\| \leq 1 +  \delta (\varepsilon)=  1+ \eta (\frac{\varepsilon}{2} )$. Put $u= y,  v= x+y $. So  $0 \leq u \leq v $ and
	\begin{align}
	\|v - u \| = \|x\| = 1 & >    \Bigl( 1  - \eta \Bigl ( \frac{\varepsilon}{2} \Bigr)  \Bigr) \Bigl( 1 +  \eta \Bigr(\frac{\varepsilon}{2} \Bigr) \Bigr) 
	\nonumber   \\
	& \geq \Bigl( 1 -  \eta \Bigl(\frac{\varepsilon}{2} \Bigr)  \Bigr) \|x +y \|
	\nonumber \\
	& =  \Bigl ( 1  - \eta \Bigl (\frac{\varepsilon}{2} \Bigr)  \Bigr) \|v\|.
	\nonumber 
	\end{align}
	
	Hence from the assumptions we conclude   $ \|y\|=\| u\| \leq  \frac{\varepsilon}{2}  \|v\| \leq \frac{\varepsilon}{2} ( 1+  \eta (\frac{\varepsilon}{2} )) \le\varepsilon.$ So $Y$ is  uniformly monotone with $\delta (\varepsilon)= \eta (\frac{\varepsilon}{2} )$.	
\end{proof}

\begin{proposition}
	\label{prop-ppp}
	Let $X$  be a Banach lattice, $M$ and $N$ be  non zero
	Banach sublattices of $X$ such that $X= M \oplus _\infty N$ and the  canonical projections from $X$ to $M$ and $N$ are positive operators.
	If  $Y$ is a strictly monotone Banach lattice and the pair $(X,Y)$ has the Bishop-Phelps--Bollob{\'a}s property for positive operators   then $Y$ is  uniformly monotone.
\end{proposition}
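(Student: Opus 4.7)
The plan is to prove the contrapositive: assuming $Y$ is strictly monotone but \emph{not} uniformly monotone, we construct a positive operator $S \in S_{L(X,Y)}$ and a positive $x_0 \in S_X$ at which $S$ almost attains its norm, yet for which no nearby positive norm-attaining operator can exist. By Proposition \ref{pro-char-UM}, the failure of uniform monotonicity provides $\varepsilon_0 \in (0,1)$ such that for every $\eta > 0$ one can find $u, v \in Y$ with $0 \le u \le v$, $\|v\| = 1$, $\|v - u\| > 1 - \eta$, and $\|u\| > \varepsilon_0$.

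Fix positive unit vectors $m_0 \in S_M$ and $n_0 \in S_N$, available because $M$ and $N$ are non-trivial Banach sublattices. Applying Hahn-Banach to each and replacing the obtained norming functionals by their absolute values in the dual Banach lattices $M^*, N^*$ produces positive norm-one functionals $\phi \in M^*$, $\psi \in N^*$ with $\phi(m_0) = \psi(n_0) = 1$; composing with the positive canonical projections extends them to positive norm-one functionals on $X$. Set $\alpha := \tfrac{1}{2}$, take $x_0 := m_0 + \alpha n_0$, which is positive and of norm $1$ in $X = M \oplus_\infty N$, and define
\[
S(m+n) := \phi(m)(v - u) + \psi(n)\, u .
\]
The operator $S$ is positive because $v - u, u \ge 0$ and the functionals are positive. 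For any $x \in B_X$ one has $|S(x)| \le |\phi(x)|(v - u) + |\psi(x)| u \le v$, so $\|S\| \le 1$; equality holds since $S(m_0 + n_0) = v$. Moreover $S(x_0) = v - (1 - \alpha) u \ge v - u$, and lattice monotonicity of the norm gives $\|S(x_0)\| \ge \|v - u\| > 1 - \eta$.

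Fix now $\varepsilon \in (0, \varepsilon_0/4)$, let $\eta(\varepsilon) > 0$ be the quantity from the BPBp for positive operators, and carry out the construction with the failure parameter $\eta$ taken strictly below $\eta(\varepsilon)$. Applying the BPBp to $(S, x_0)$ yields a positive $T \in S_{L(X, Y)}$ and $u_0 \in S_X$ with $\|T(u_0)\| = 1$, $\|u_0 - x_0\| < \varepsilon$ and $\|T - S\| < \varepsilon$; by Remark \ref{u0} we may take $u_0 \ge 0$. Writing $u_0 = m' + n'$ with $m' \in M$ and $n' \in N$ its canonical components (both nonnegative by positivity of the projections), the $\ell_\infty$-decomposition forces $\|m' - m_0\|, \|n' - \alpha n_0\| < \varepsilon$, so $\|n'\| < \alpha + \varepsilon < 1$ and consequently $\|m'\| = 1$. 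Lemma \ref{normpp} then delivers $\|T(m')\| = 1$, and applying strict monotonicity of $Y$ to $0 \le T(m') \le T(u_0)$ with $\|T(m')\| = \|T(u_0)\| = 1$ forces $T(m') = T(u_0)$, that is, $T(n') = 0$.

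The contradiction is extracted by comparing the two sides at $n'$. On one hand, $\|T - S\| < \varepsilon$ gives $\|S(n')\| = \|T(n') - S(n')\| < \varepsilon\|n'\|$; since $n' \in N$ we have $S(n') = \psi(n')\, u$, so $\psi(n')\|u\| < \varepsilon\|n'\|$, and using $\|u\| > \varepsilon_0$ together with $\|n'\| < \alpha + \varepsilon$ gives $\psi(n') < \varepsilon(\alpha + \varepsilon)/\varepsilon_0$. On the other hand, $\psi(n') \ge \alpha\psi(n_0) - \|n' - \alpha n_0\| > \alpha - \varepsilon = \tfrac{1}{2} - \varepsilon$. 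For $\varepsilon < \varepsilon_0/4$ these two estimates are incompatible, giving the desired contradiction. The main technical obstacle is arranging that the BPBp conclusion lands in the regime $\|n'\| < 1$ so that Lemma \ref{normpp} and strict monotonicity can both be applied; this is precisely why the asymmetric choice $x_0 = m_0 + \tfrac{1}{2} n_0$ is used, as the symmetric choice $m_0 + n_0$ would admit perturbations with $\|m'\| = \|n'\| = 1$ where the argument breaks down.
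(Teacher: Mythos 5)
Your argument is correct and follows essentially the same route as the paper: the same rank-two test operator $S(x)=m_0^*(P(x))(v-u)+n_0^*(Q(x))u$, the same use of Lemma \ref{normpp} to pass to the $M$-component, and the same use of strict monotonicity to annihilate the $N$-part of $T$, all feeding into the characterization of uniform monotonicity in Proposition \ref{pro-char-UM}. The only differences are cosmetic: the paper applies the BPBp at the point $m_0$ itself (so $\Vert Q(x_1)\Vert<\varepsilon$ automatically and no asymmetric weight $\tfrac12$ is needed), shows $T(n_0)=0$ for the \emph{original} $n_0$, and concludes directly from $\Vert u\Vert=\Vert S(n_0)\Vert=\Vert(S-T)(n_0)\Vert<\varepsilon$ rather than via your contrapositive and the two-sided estimate on $\psi(n')$.
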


\begin{proof}
	We will show that $Y$ satisfies condition 2) in Proposition \ref{pro-char-UM}.    Let   $ 0< \varepsilon < 1$. Let take elements  $u$ and $v$ in $Y$  such that 
	$$ 
	0 \leq u \leq v,  \sem   \|v\| = 1  \sem \text{and}  \sem \|v - u\| > 1 - \eta (\varepsilon),
	$$
	where $\eta$ is the function satisfying the definition of BPBp for positive operators for the pair $(X,Y)$.
	
	Since $M $ and $N$ are non  zero Banach sublattices, there are  positive   elements $m_0 \in S_M$ and $n_0 \in S_N$. 
	By using Hahn-Banach theorem for Banach lattices and   positive elements (see \cite[Theorem 39.3]{Zaan}),  there are positive  functionals  $m_{0}^* \in S_{M^*}$ and $n_{0}^* \in S_{N^*}$ such that $ m_{0}^* (m_0)= 1= n_{0}^* (n_0 ) .$ 
	
	Let  $P$ and $Q$ be  the canonical projections from $X$ to $M$ and $N,$ respectively. 
	Consider  the operator $S$ from $X$  to $Y$ given by
	$$ 
	S(x) =  m_{0}^* (P(x)) (v-u) + n_{0}^* (Q(x)) u,  \sem   (x \in X).
	$$
	Since $v - u$ and $u$ are positive elements  in $Y,$ the functionals  $m_{0} ^* $ and $n_{0}^*$  are positive on $M $ and $N,$ respectively, and the  projections $P$ and $Q$  are positive operators on $X$, then  $S$ is a   positive operator from $X$ to $Y$.
	
	By using the assumptions on $X$, the fact that the functionals $m_{0}^* $ and $n_{0}^*$ belong to the unit sphere of $M^*$ and $N^*$ respectively, since $ v-u$ and $u$  are positive elements in $Y,$ for any element $x \in B_X$  we have that
	\begin{eqnarray}
	\label{norm-S}
	\nonumber
	\Vert S(x) \Vert & \le &  \Vert  \Vert P(x) \Vert ( v-u)  +  \Vert Q(x) \Vert \; u \Vert  \\
	& \le  &  \Vert v \Vert =1. 
	\end{eqnarray}
	Since $m_0+n_0 \in S_X$ and  $S(m_0+n_0)= v \in S_Y$, we deduce that  $S \in S_{ L(X,Y)}.$
	Notice also that $\|S (m_0) \| = \|v-u\|  > 1 - \eta (\varepsilon) $.  By using that  the pair  $(X,Y)$ has the BPBp for positive operators and Remark \ref{u0},   there exist a positive operator $T \in S_{L(X,Y)}$ and  a positive element $x_1 \in S_X$ satisfying 
	$$
	\Vert  T (x_1) \Vert =1, \sem    	
	\Vert T-S\Vert   < \varepsilon  \sem \text{and} \sem   \| x_1 - m_0  \|  < \varepsilon.
	$$
	If we  write $m_1= P(x_1) $ and $n_1 = Q(x_1)$, since  $\|n_1\| = \Vert Q(x_1 -m_0  ) \Vert \le \Vert x_1 - m_0 \Vert <  \varepsilon < 1, $ from Lemma \ref{normpp} we conclude that  $ \Vert T(m_1 ) \Vert = 1$.

	Since $T$ and  $P$ are positive operators and $x_1$ is a  positive  element in $X$, we have that
	$$
	1 = \Vert T(m_1 ) \Vert \le   \Vert T(m_1 + n_0)  \Vert \le 1.
	$$
	By using that $Y$ is strictly monotone we obtain that  $T(n_0)=0.$
	As a consequence, 
	$$
	\| u\|  = \| S( n_0)\| =  \| (S-T) (n_0)  \| \le \| S-T\| < \varepsilon.
	$$
	In view of  Proposition \ref{pro-char-UM}  we  proved that $Y$ is uniformly monotone. 
\end{proof}

As a consequence of  Proposition  	\ref{prop-ppp}  and Corollaries
\ref{cor-BPBp-pos-c0-UM-lattice} and  \ref{cor-BPBp-pos-L-infty-UM-lattice}  
we deduce the following result, which is a version of the one obtained by Kim that asserts  that a Banach space $Y$ is uniformly convex  whenever it is strictly convex and the pair $(c_0,Y)$  has the Bishop-Phelps--Bollob{\'a}s property for operators (\cite[Theorem 2.7]{KiI}).

\vspace{3mm}

It is worth to point out that a Banach lattice is strictly monotone whenever it is strictly convex. 
Analogously  uniform convexity implies uniform monotonicity (see  \cite[Theorem 1]{HKM}, for instance).  Notice that the converse results do not hold since  every $L_1(\mu)$ such that $\dim L_1(\mu) > 1$ is uniformly monotone, but it is not strictly convex.

\vspace{10mm}

\begin{corollary} 
\label{cor-optimal}
	
	\begin{enumerate} 
		\item[1)] Let  $Y$ be a   strictly monotone Banach lattice.  Then the pair $(c_0,Y)$ has the BPBp for positive operators  if and only if $Y$ is uniformly monotone.
		
	\item[2)]  Let $\mu $ be a positive measure such that   $\dim L_\infty (\mu) > 1$ and let   $Y$   be a strictly  monotone Banach lattice.  If   the pair $(L_\infty (\mu), Y)$ has the BPBp for positive operators,   then  $Y$ is uniformly monotone. In case that  $Y$ has a weak unit the converse  is also  true.  
		
	\end{enumerate}

\end{corollary}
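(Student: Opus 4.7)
The plan is to prove each of the two statements by reducing it to a result already established in the paper, treating the two implications separately.

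For the \emph{uniform monotonicity implies BPBp for positive operators} direction, statement (1) is exactly the content of Corollary \ref{cor-BPBp-pos-c0-UM-lattice}, which requires no weak unit hypothesis on the range, while the converse in statement (2) is precisely Corollary \ref{cor-BPBp-pos-L-infty-UM-lattice}, whose hypothesis is a weak unit in $Y$ together with uniform monotonicity. So these assertions follow by direct citation, and it is no accident that statement (1) is a clean equivalence whereas statement (2) only asserts the converse under the extra weak-unit assumption.

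For the other direction, \emph{BPBp for positive operators implies uniform monotonicity of $Y$}, the strategy is to apply Proposition \ref{prop-ppp}. This reduces the problem to exhibiting, in each of $X=c_0$ and $X=L_\infty(\mu)$ with $\dim L_\infty(\mu)>1$, a decomposition $X=M\oplus_\infty N$ into two non-zero closed Banach sublattices whose canonical projections are positive operators. For $X=c_0$ I would take $M=\mathbb{R}\, e_1$ and $N=\{x\in c_0:x(1)=0\}$; both are trivially closed Banach sublattices, the decomposition is an $\ell_\infty$-sum, and evaluation at the first coordinate together with its complementary projection are positive. For $X=L_\infty(\mu)$ the hypothesis $\dim L_\infty(\mu)>1$ guarantees the existence of a measurable set $E\subset\Omega$ such that both $L_\infty(E,\mu|_E)$ and $L_\infty(\Omega\setminus E,\mu|_{\Omega\setminus E})$ are non-trivial; then $L_\infty(\mu)$ coincides (order-isometrically) with the $\ell_\infty$-sum of these two sublattices, and multiplication by $\chi_E$ and by $\chi_{\Omega\setminus E}$ define the desired positive canonical projections. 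With either decomposition, Proposition \ref{prop-ppp} immediately delivers the uniform monotonicity of $Y$.

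The only non-bookkeeping step is the measure-theoretic argument producing the $M$-summand decomposition of $L_\infty(\mu)$ under the sole assumption $\dim L_\infty(\mu)>1$: one must verify that this dimensional hypothesis rules out the degenerate case in which $\mu$ is essentially concentrated on a single atom, so that a genuine splitting $\Omega=E\sqcup(\Omega\setminus E)$ with both $L_\infty$-factors non-trivial exists. This is the main (minor) obstacle; once it is settled, the rest is a direct application of Proposition \ref{prop-ppp} together with the characterisation in Proposition \ref{pro-char-UM}.
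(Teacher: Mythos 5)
Your proposal is correct and follows exactly the route the paper intends: the positive directions are direct citations of Corollaries \ref{cor-BPBp-pos-c0-UM-lattice} and \ref{cor-BPBp-pos-L-infty-UM-lattice}, and the necessity directions come from Proposition \ref{prop-ppp} applied to the decompositions $c_0=\R e_1\oplus_\infty\{x:x(1)=0\}$ and $L_\infty(\mu)=\chi_E L_\infty(\mu)\oplus_\infty\chi_{\Omega\setminus E}L_\infty(\mu)$ (the paper leaves these decompositions implicit, and your verification that $\dim L_\infty(\mu)>1$ yields such a set $E$ is the routine step you correctly identify). Nothing is missing.
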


\bibliographystyle{amsalpha}

\begin{thebibliography}{99}
	%

\bibitem{AbAl}
 Y.A. Abramovich and C.D. Aliprantis,  \textit{An invitation to operator theory,} Graduate Studies in Mathematics, \textbf{50},  American Mathematical Society, Providence, RI, 2002.
%
	\bibitem{Acs}
	M.D. Acosta, \textit{Denseness of norm attaining mappings},  
	RACSAM. Rev. R. Acad. Cienc. Exactas Fís. Nat. Ser. A Mat.  {\bf 100} (2006), 9--30.
	%
	\bibitem{Acbc}
	M.D. Acosta, \textit{On the  Bishop-Phelps-Bollob{\'a}s property},  
	preprint, 2019.
	%
	\bibitem{AAGM}
	M.D. Acosta, R.M. Aron, D. Garc\'{\i}a and M. Maestre, \textit{The Bishop-Phelps-Bollob\'{a}s
		theorem for operators}, J. Funct. Anal. \textbf{254} 
	(2008), 2780--2799.
	%
	
	\bibitem{AD}
	M.D. Acosta and J.L. D{\'a}vila, {\it  A basis of $\R ^n$ with  good isometric properties  and some applications to denseness  of norm attaining operators,}  preprint, {\tt arXiv:1811.08387v1 [math.FA]}.
	
	\bibitem{ADS}
	M.D. Acosta, J.L. D{\'a}vila and M. Soleimani-Mourchehkhorti, {\it  Characterization of the Banach spaces $Y$ satisfying that the pair $(\ell_\infty ^4, Y)$ has the Bishop-Phelps-Bollob\'{a}s property for operators},  J. Math. Anal. Appl. \textbf{470} (2019),  690--715.
	%
	
	\bibitem{AGKM}
	M.D. Acosta, D. Garc{\'i}a, S.K. Kim and M. Maestre, {\it  The Bishop-Phelps-Bollob\'{a}s property for operators from $c_0$ into some Banach spaces},  J. Math. Anal. Appl. \textbf{445} (2017),  1188--1199.
	%
	
	\bibitem{ASpos}
	M.D. Acosta and M. Soleimani-Mourchehkhorti, {\it   Bishop-Phelps-Bollob{\'a}s  property  for positive operators  between classical Banach spaces,} preprint.
	%
	\bibitem{Bi}
	G.~Birkhoff, \emph{Lattice theory}, American Mathematical Society Colloquium Publications
	\textbf{25},  American Mathematical Society, Providence, RI, 1973.
	%
	\bibitem{BP} 
	E. Bishop and R.R. Phelps, {\it A proof that every Banach space is subreflexive}, Bull. Amer. Math. Soc. \textbf{67} (1961), 97--98.
	%
	\bibitem{Bol}
	B. Bollob\'as, {\it An extension to the theorem of Bishop and Phelps}, Bull. London. Math.
	Soc. {\bf 2} (1970), 181--182.
	%
	\bibitem{BoDu}
	F.\,F.~Bonsall and J.~Duncan, {\it  Numerical ranges  II,} London
	Mathematical Society Lecture Notes Series, No. {\bf 10}, Cambridge
	University Press, New York-London, 1973.
	%
	\bibitem{CKMMR}
	M. Chica, V. Kadets, M. Mart{\'\i}n, S. Moreno-Pulido and F. Rambla-Barreno, {\it Bishop-Phelps-Bollob\'{a}s moduli of a Banach space}, J. Math.  Anal. Appl. {\bf 412}
	(2014), 
	697--719.
	%
	\bibitem{HKM}
	H. Hudzik, A. Kami\'nska and  M. Mastylo, {\it Monotonicity and rotundity properties  in Banach lattices}, Rocky Mountain J. Math. {\bf 30}
	(2000), 
	933--950.

	%
	\bibitem{KiI}
	S.K. Kim, {\it The Bishop-Phelps-Bollob{\'a}s theorem for operators from $c_0$ to uniformly convex spaces}, Israel J. Math.  \textbf{197} (2013), 425--435.
	%
	\bibitem{KLL}
	S.K.  Kim, H.J. Lee and P.K. Lin, {\it The Bishop-Phelps-Bollob{\'a}s property for operators from $L_\infty (\mu)$ to uniformly convex Banach spaces}, J. Nonlinear Convex Anal. \textbf{17} (2016), 243--249.
	%

\bibitem{LiTz}
J. Lindenstrauss and L. Tzafriri,  \textit{Classical Banach spaces II.  Function spaces,}  \textbf{97},  Springer-Verlag, Berlin, 1979. 
%
\bibitem{Zaan}
A.C.   Zaanen,  \textit{ Introduction to operator theory in Riesz spaces,}  Springer-Verlag, Berlin, 1997. 
\end{thebibliography}

\end{document}